\documentclass[12pt]{amsart}
\usepackage{amssymb}
\usepackage{amsthm,amsfonts}
\usepackage{amsfonts}
\usepackage{amsmath}
\usepackage[colorlinks]{hyperref}
\usepackage{enumitem}
\usepackage{graphicx}

\usepackage{tikz}
\usepackage{pgfplots}
\usepgfplotslibrary{fillbetween}
\usetikzlibrary{arrows.meta,calc}
\pgfplotsset{compat=1.17}
\usepackage[initials]{amsrefs}
\usepackage{color}
\RequirePackage{transparent}
\usepackage{geometry}
\definecolor{regionR}{RGB}{191,219,254}
\definecolor{regionA}{RGB}{249,199,79}
\definecolor{regionB}{RGB}{43,179,165}
\definecolor{regionC}{RGB}{167,139,203}
\definecolor{regionCfour}{RGB}{224,52,120}
\definecolor{regionD}{RGB}{238,240,243}

\definecolor{figA}{RGB}{189,214,239}      
\definecolor{figB}{RGB}{244,192,143}      
\definecolor{figCOne}{RGB}{204,185,227}   
\definecolor{figCTwo}{RGB}{160,211,204}   
\definecolor{figCThree}{RGB}{241,210,112} 
\definecolor{figCFour}{RGB}{235,145,135}  
\definecolor{figD}{RGB}{248,248,248}      
\definecolor{figZoom}{RGB}{142,38,53}     
\definecolor{figQ}{RGB}{31,78,121}        

\newtheorem{Theorem}{Theorem}[section]

\newtheorem{Lemma}[Theorem]{Lemma}

\newtheorem{proposition}[Theorem]{Proposition}

\theoremstyle{definition}

\newcommand{\R}{\mathbb{R}}

\DeclareMathOperator*{\ext}{ext}
\DeclareMathOperator*{\graph}{graph}
\begin{document}
	\title[Geometry of quadratic forms over a uniparametric family of squares]{Geometry of quadratic forms over a uniparametric family of squares}
	\author[Llorente]{J. Llorente}
	\address[J. Llorente]{\mbox{}\newline \indent Instituto de Matem\'atica Interdisciplinar (IMI)\newline \indent Departamento de An\'{a}lisis Matem\'{a}tico y Matem\'atica Aplicada \newline \indent
		Facultad de Ciencias Matem\'{a}ticas\newline \indent
		Plaza de Ciencias 3 \newline \indent
		Universidad Complutense de Madrid \newline \indent
		Madrid, 28040, Spain.}
	\email{jesllore@ucm.es}
	\author[Muñoz]{G. A. Muñoz-Fernández}
	\address[G. A. Muñoz-Fernández]{\mbox{}\newline \indent Instituto de Matem\'atica Interdisciplinar (IMI)\newline \indent Departamento de An\'{a}lisis Matem\'{a}tico y Matem\'atica Aplicada \newline \indent
		Facultad de Ciencias Matem\'{a}ticas\newline \indent
		Plaza de Ciencias 3 \newline \indent
		Universidad Complutense de Madrid \newline \indent
		Madrid, 28040, Spain.}
	\email{gustavo@ucm.es}
	\author[Rodríguez]{D. L. Rodr\'iguez-Vidanes}
	\address[D. L.~Rodr\'iguez-Vidanes]{\mbox{}\newline \indent Grupo de Análisis Matemático y Aplicaciones \newline \indent Departamento de Matemática Aplicada a la Ingeniería Industrial \newline \indent
		Escuela Técnica Superior de Ingeniería y Diseño Industrial\newline \indent
		Ronda de Valencia 3 \newline \indent
		Universidad Politécnica de Madrid \newline \indent
		Madrid, 28012, Spain.}
	\email{dl.rodriguez.vidanes@upm.es}
	\author[Seoane]{J. B.~Seoane-Sep\'{u}lveda}
	\address[J. B.~Seoane-Sep\'{u}lveda]{\mbox{}\newline \indent Instituto de Matem\'atica Interdisciplinar (IMI)\newline \indent Departamento de An\'{a}lisis Matem\'{a}tico y Matem\'atica Aplicada \newline \indent
		Facultad de Ciencias Matem\'{a}ticas\newline \indent
		Plaza de Ciencias 3 \newline \indent
		Universidad Complutense de Madrid \newline \indent
		Madrid, 28040, Spain.}
	\email{jseoane@mat.ucm.es}
	\author[Tag]{Hyung-Joon Tag}
	\address[Hyung-Joon Tag]{\mbox{}\newline \indent Department of Mathematics and Statistics
		\newline \indent University of North Carolina at Greensboro \newline \indent
		116 Petty Building, \newline \indent
		Greensboro, North Carolina, 27402, USA.}
	\email{hjtag4@gmail.com}
	\subjclass[2020]{Primary 46B20. Secondary 46G25, 52A21.}   
	\keywords{Extreme points, quadratic forms, polynomial norms, convexity, homogeneous polynomials}
	\thanks{
		This work was partially carried out during two visits by the fifth author to the Department of Mathematical Analysis and Applied Mathematics, Faculty of Mathematical Sciences, Universidad Complutense de Madrid, Spain. The fifth author gratefully acknowledges the support and hospitality received during these visits.
	}
	\maketitle
\begin{abstract}
		We characterize the extreme points of the parametric space of quadratic forms on ${\mathbb R}^2$ endowed with the sup norm on the squares $\square_h := [h,h+1] \times [0,1]$ with $h>0$.
\end{abstract}

\section{Introduction}

The search for characterizations of the extreme points of the unit ball of a polynomial space of low dimension has  motivated a plethora of publications in the last 30 years. A few representative examples are given below:

\begin{enumerate}

	\item The spaces ${\mathcal P}(^2\ell_p^2({\mathbb R}))$ of the quadratic forms on ${\mathbb R}^2$ with the supremum norm on the unit ball of $\ell^2_p$  (see \cite{CK1,CK2,CK3,ChKiKi} for $p=1,2,\infty$ and \cite{G3} for $p\in(1,2)\cup(1,\infty)$).

	\item The space ${\mathcal P}(^3H)$ of the homogeneous polynomials of degree $3$ on a Hilbert space $H$ endowed with the supremum norm over the closed unit ball of $H$ (see \cite{G5}).
	
	\item The spaces ${\mathcal P}_2({\mathbb R})$ and ${\mathcal P}_2({\mathbb C})$ of real quadratic polynomials with the supremum norm on $[-1,1]$ and the closed unit disk in ${\mathbb C}$, denoted by ${\mathbb D}$, respectively (see \cite{AK}).	
	
	\item The space ${\mathcal P}_{m,n}({\mathbb R})$ of real trinomials of the form $ax^m+bx^n+c$ with $m>n$ ($m,n\in{\mathbb N}$) endowed with the supremum norm on $[-1,1]$ (see \cite{MS}).
	
	\item The space ${\mathcal P}_{m,n,k}({\mathbb C})$ of the real trinomials of the form $az^m+bz^n+cz^k$ with $a,b,c\in{\mathbb R}$ and $m,n,k\in {\mathbb N}$ with $m>n>k$ endowed with the supremum norm on ${\mathbb D}$ (see \cite{N}).
	
	\item The space ${\mathcal P}_2(\Delta)$ of polynomials of degree at most $2$ on ${\mathbb R}^2$ endowed with the	supremum norm over the simplex $\Delta$ of vertices $(0,0)$, $(1,0)$ and $(0,1)$ (see \cite{MN1,MN2,MN3}).	
	
	\item The space ${\mathcal P}(^2\Delta(P,Q))$ of the quadratic forms on ${\mathbb R}^2$ endowed with the supremum norm on the simplex $\Delta(P,Q)$ with vertices at the origin $(0,0)$ and the points $P=(\alpha_1,\alpha_2)$ and $Q=(\beta_1,\beta_2)$ with $P$ and $Q$ being linearly independent (see \cite{AGM2025}). The particular case where $P=(1,0)$ and $Q=(0,1)$ was studied in (see \cite{MRS}).
	
	\item The space ${\mathcal P}(^2{\mathcal O}_w^2)$ of the quadratic forms on ${\mathbb R}^2$ endowed with the norm
	$$
	\|ax^2+bxy+cy^2\|_{{\mathcal O}_w^2}=\sup\left\{|ax^2+bxy+cy^2|:\|(x,y)\|_{\text{oct}(w)}\leq 1\right\}
	$$
	where 
	$$
	\|(x,y)\|_{\text{oct}(w)}=\max\left\{|x|,|y|,\frac{|x|+|y|}{1+w}\right\}
	$$
	for a fixed $w\in[0,1]$ (see \cite{Ki2}). 
	
	\item The space ${\mathcal P}(^2\Box)$ of the quadratic forms $ax^2+bxy+cy^2$ with $a,b,c\in{\mathbb R}$ endowed with the supremum norm on the square $\Box=[0,1]^2$ (see \cite{GMSS}).
	
	\item The space ${\mathcal P}(^2{\mathcal H}_w^2)$ of the quadratic forms $ax^2+bxy+cy^2$ with $a,b,c\in{\mathbb R}$ endowed with the norm $\|ax^2+bxy+cy^2\|_{{\mathcal H}_w^2}=\sup\left\{|ax^2+bxy+cy^2|:\|(x,y)\|_{\text{hex}(w)}\leq 1\right\}$ where $\|(x,y)\|_{\text{hex}(w)}=\max\left\{|y|,(|x|+(1-w)|y|)\right\}$ for a fixed $w\in[0,1]$ (see \cite{Ki3}). 
	
	\item The space ${\mathcal P}(^2D(\alpha,\beta))$ (see \cite{MPSW,BMRS}) of the quadratic forms $ax^2+bxy+cy^2$ with $a,b,c\in{\mathbb R}$, $\alpha,\beta\in[0,\infty)$ and $\alpha\leq \beta$, endowed with the supremum norm on the circular sectors $D(\alpha,\beta)=\{re^{i\theta}:r\in[0,1],\theta\in[\alpha,\beta]\}$.
	
	\item The space ${\mathcal P}_{m,n}^h({\mathbb R})$ of the homogeneous trinomials of the form $ax^m+bx^{m-n}y^n+cy^m$ with $a,b,c\in{\mathbb R}$ and $m\ge n$ ($m,n\in{\mathbb N}$) endowed with the supremum norm on the square $[-1,1]^2$ (see \cite{JMR2021,GMS2023,GJMMS2025}).

\end{enumerate}

An interested reader can find in monograph \cite{FGMMRS} an extensive compilation of results on the geometry of polynomial spaces, including most of the cases mentioned in the list above.

By combining the Krein--Milman theorem with the aforementioned geometric studies, one can derive several sharp polynomial inequalities using the so-called Krein--Milman approach. Numerous examples demonstrating the successful application of this approach to obtain optimal polynomial inequalities can be found here \cite{AJMS,AJMS2,AMRS,AEMRS,DMPS,JMMS,JMPS,MPS,MRS,MSanSeo1,MSanSeo2,MSS,MN4}. Another comprehensive monograph  focusing on the application of the previously described geometric results to the derivation of exact polynomial inequalities is available in \cite{GJMMS2025}.

In \cite{AGM2025}, the authors investigate the geometry of the unit ball of a parametric family of polynomial spaces defined over the triangles $\Delta_h$ with vertices $(0,0)$, $(1,h)$, and $(h,0)$. Motivated by this work, in the present paper we further explore the geometric properties of one-parameter families of polynomial spaces. In particular, for $h>0$, let
\[
\square_h := [h,h+1]\times[0,1]\subseteq\mathbb{R}^2
\]
denote the translation of the unit square $[0,1]^2$ by a distance $h$ in the positive direction of the $x$-axis.
Let $\mathcal P(^2 \square_h)$ be the vector space of $2$-homogeneous polynomials on $\R^2$ endowed with the norm
$$
\|P\|_h := \sup \{ |P(x,y)| : (x,y)\in \square_h \}.
$$
Clearly, we can express $(\mathcal P(^2 \square_h),\|\cdot\|_h)$ as the three-dimensional space $(\R^3,\|\cdot\|_h)$ where
$$
\|(a,b,c)\|_h := \|ax^2+bxy+cy^2\|_h.
$$
We denote the closed unit ball and the unit sphere of ${\mathcal P}(^2\square_h)$ by 
\begin{align*}
	{\mathsf B}_h & := \{(a,b,c)\in{\mathbb R}^3:\|(a,b,c)\|_h\leq 1\},\\
	{\mathsf S}_h & := \{(a,b,c)\in{\mathbb R}^3:\|(a,b,c)\|_h= 1\},
\end{align*}
respectively. The $ac$-plane, i.e., $b=0$, is denoted by $\pi_{ac}$ and the orthogonal projection of ${\mathsf{B}_h}$ onto the $ac$-plane by $\pi_{ac}({\mathsf B}_h)$. Similar notations can be adopted for the other planes and the orthogonal projections onto those planes.   

The space ${\mathcal P}(^2\square_h)$ will be studied according to the scheme used in~\cite{AK}. First we will obtain an explicit formula for $\|(a,b,c)\|_h$ in Section~\ref{sec:formula}. That formula will be crucial in order to project the unit ball ${\mathsf B}_h$ onto the $ac$-plane, which will be done in Section~\ref{sec:Projection}. The projection $\pi_{ac}({\mathsf B}_h)$ will be needed in Section~\ref{sec:Parametriztion} to obtain a parametrization of the unit sphere ${\mathsf S}_h$. Once we parametrize ${\mathsf S}_h$, the extreme points of ${\mathsf B}_h$ can be easily characterized, as we will show in Section~\ref{sec:ExtremePoints}.


\section{A formula for the norm}\label{sec:formula}

The aim of this section is to derive an explicit formula for the norm $\|\cdot\|_h$.

We first consider the case $b=0$.

\begin{proposition}\label{prop:bzero}
For every $a,c\in\mathbb R$,
\[
\|(a,0,c)\|_h
=
\max\bigl\{|ah^2|,|a(h+1)^2|,|ah^2+c|,|a(h+1)^2+c|\bigr\}.
\]
Consequently, if $a\neq0$, then
\[
\|(a,0,c)\|_h=
\begin{cases}
\operatorname{sgn}(a)(-ah^2-c),
& \text{if } \displaystyle \frac ca\le -h^2-(h+1)^2,\\[4pt]
|a|(h+1)^2,
& \text{if } \displaystyle -h^2-(h+1)^2\le\frac ca\le0,\\[4pt]
\operatorname{sgn}(a)\bigl(a(h+1)^2+c\bigr),
& \text{if } \displaystyle \frac ca\ge0,
\end{cases}
\]
whereas $\|(0,0,c)\|_h=|c|$ for every $c\in\mathbb R$.
\end{proposition}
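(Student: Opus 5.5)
For $b=0$ the polynomial is $P(x,y)=ax^2+cy^2$. On $\square_h$ we have $x\in[h,h+1]$ with $x>0$, so the substitution $u=x^2$, $v=y^2$ maps $\square_h$ bijectively onto the rectangle $[h^2,(h+1)^2]\times[0,1]$. On that rectangle $P$ becomes the affine (indeed linear) function $L(u,v)=au+cv$. The supremum of $|L|$ over a compact convex polygon is attained at a vertex: $|L|$ is convex, so its maximum over the convex hull of the vertices is attained at a vertex. The vertices are $(h^2,0)$, $((h+1)^2,0)$, $(h^2,1)$ and $((h+1)^2,1)$, at which $L$ takes the values $ah^2$, $a(h+1)^2$, $ah^2+c$ and $a(h+1)^2+c$. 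This gives the first formula. The case $a=0$ is immediate, since the four values reduce to $0,0,c,c$.

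For the piecewise formula, I will assume $a\neq 0$ and use homogeneity: $\|(a,0,c)\|_h=|a|\,\|(1,0,c/a)\|_h$ when $a>0$. When $a<0$, write $(a,0,c)=-(|a|,0,-c)$, which reduces everything to $a=1$ with $t=c/a$. Since $|a|\operatorname{sgn}(a)=a$, the stated expressions transform correctly. So I need to compute
\[
f(t)=\max\{h^2,(h+1)^2,|h^2+t|,|(h+1)^2+t|\}.
\]
The term $h^2$ never dominates, because $(h+1)^2>h^2$. I then split into cases on $t$.

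\begin{itemize}
\item If $t\ge 0$, then $(h+1)^2+t$ is the largest of all the terms, which gives $a(h+1)^2+c$ after multiplying by $a$.
\item If $t\le 0$, then $|(h+1)^2+t|\le (h+1)^2$ whenever $(h+1)^2+t\ge -(h+1)^2$. When $(h+1)^2+t<-(h+1)^2$, we get $|h^2+t|=-h^2-t$, which is even larger. So in every case with $t\le0$ the maximum is $\max\{(h+1)^2,\,-h^2-t\}$. I also need $|h^2+t|\le\max\{(h+1)^2,-h^2-t\}$: if $h^2+t\ge0$ this follows from $t\le 0$, and otherwise $|h^2+t|=-h^2-t$.
\end{itemize}

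Finally, $-h^2-t\ge (h+1)^2$ exactly when $t\le -h^2-(h+1)^2$. This yields the threshold in the statement and the value $-ah^2-c$ (times $\operatorname{sgn}(a)$ after undoing the normalization). The formulas agree at the breakpoints, so the overlapping inequalities in the statement are consistent. The only point needing care is this case bookkeeping and the sign normalization; the vertex argument itself is routine.
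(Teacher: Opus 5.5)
Your proposal is correct and follows the paper's proof. The substitution $X=x^2$, $Y=y^2$ turns the polynomial into a linear function on the rectangle $[h^2,(h+1)^2]\times[0,1]$, so its absolute value is maximized at one of the four vertices. Your reduction to $a=1$ with $t=c/a$ and the case analysis then spell out correctly, including the sign bookkeeping, the part the paper dismisses as following immediately.
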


\begin{proof}
Put $X=x^2$ and $Y=y^2$. Then
\[
(X,Y)\in[h^2,(h+1)^2]\times[0,1]
\]
and the map $(X,Y)\longmapsto aX+cY$ is affine. 
Hence, the maximum of its absolute value is attained at one of the four vertices of this rectangle. This gives the first formula, and the remaining assertions follow immediately.
\end{proof}

\begin{figure}[t]
\centering
\includegraphics[width=0.30\linewidth]{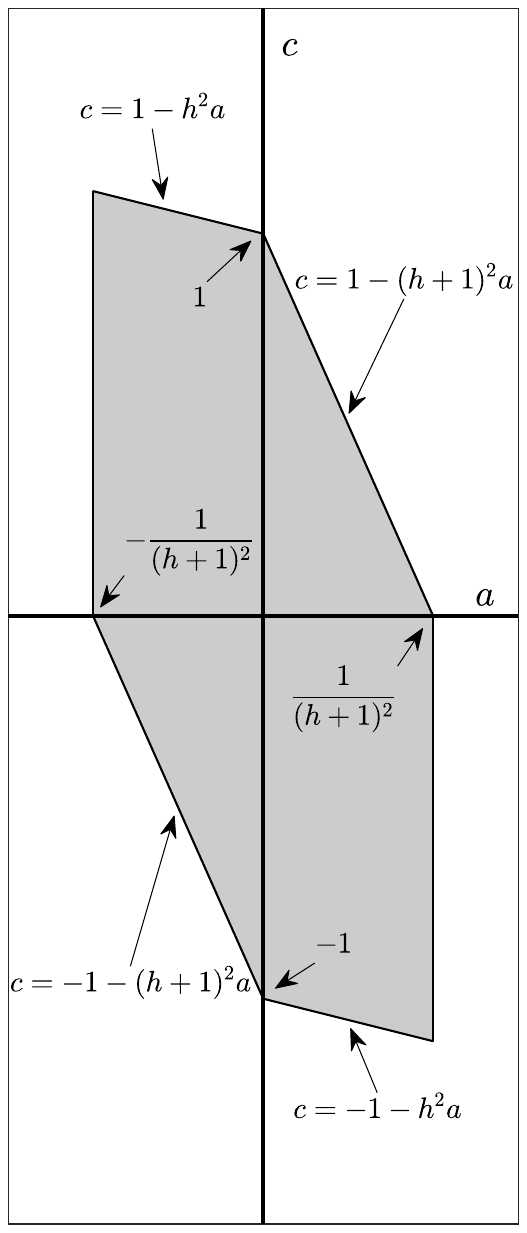}
\caption{Representation of ${\mathsf B}_h\cap\pi_{ac}$ for $h=1/2$.}
\label{fig:ac}
\end{figure}

Now assume that $b\neq0$. By homogeneity,
\begin{equation}\label{111}
\|(a,b,c)\|_h
=
|b|\left\|\left(\frac ab,1,\frac cb\right)\right\|_h.
\end{equation}
We therefore begin with the case $b=1$.

Let
\[
M := (\{h+1\}\times[0,1])\cup([h,h+1]\times\{1\}).
\]
For every $(x_0,y_0)\in\square_h\setminus M$, there exists $\lambda>1$ such that $\lambda(x_0,y_0)\in M$. Since every polynomial under consideration is $2$-homogeneous,
\[
|P(x_0,y_0)|
=
\lambda^{-2}|P(\lambda x_0,\lambda y_0)|
\le
|P(\lambda x_0,\lambda y_0)|.
\]
Thus, the norm is attained on $M$. The restrictions of $ax^2+xy+cy^2$ to the two components of $M$ are $ax^2+x+c$ and $a(h+1)^2+(h+1)y+cy^2$, respectively. Therefore,
\[
\|(a,1,c)\|_h
=
\max\bigl\{
\|(a,1,c)\|_{[h,h+1]},
\|(c,h+1,a(h+1)^2)\|_{[0,1]}
\bigr\},
\]
where, for $\alpha<\beta$,
\[
\|(A,B,C)\|_{[\alpha,\beta]}
:=
\sup_{x\in[\alpha,\beta]}|Ax^2+Bx+C|.
\]

We use the Aron--Klimek formula~\cite{AK}:
\begin{equation}\label{AKformula}
\|(A,B,C)\|_{[-1,1]}
=
\begin{cases}
\left|\dfrac{B^2}{4A}-C\right|,
& \text{if } |B|<2|A| \text{ and } \displaystyle \frac CA+1<\frac12\left(\left|\frac{B}{2A}\right|-1\right)^2,\\
|A+C|+|B|,
&\text{otherwise.}
\end{cases}
\end{equation}

For $\alpha<\beta$ and every $t\in [-1,1]$, let
\[
T(t)=\frac{\beta-\alpha}{2}t+\frac{\alpha+\beta}{2}.
\]
Then,
\[
\|(u,1,w)\|_{[\alpha,\beta]}
=
\left\|
\left(
\frac{u(\beta-\alpha)^2}{4},
\frac{u(\beta^2-\alpha^2)+\beta-\alpha}{2},
\frac{u(\alpha+\beta)^2}{4}+\frac{\alpha+\beta}{2}+w
\right)
\right\|_{[-1,1]}.
\]
In particular,
\[
\|(a,1,c)\|_{[h,h+1]}
=
\left\|
\left(
\frac a4,
\frac{(2h+1)a+1}{2},
\frac a4(2h+1)^2+\frac{2h+1}{2}+c
\right)
\right\|_{[-1,1]},
\]
and
\[
\|(c,h+1,a(h+1)^2)\|_{[0,1]}
=
\left\|
\left(
\frac c4,
\frac{c+h+1}{2},
 a(h+1)^2+\frac{h+1}{2}+\frac c4
\right)
\right\|_{[-1,1]}.
\]

For $(a,c)\in\mathbb R^2$, define
\begin{align*}
f(a,c)
&:=
\left\|
\left(
\frac a4,
\frac{(2h+1)a+1}{2},
\frac a4(2h+1)^2+\frac{2h+1}{2}+c
\right)
\right\|_{[-1,1]},\\
g(a,c)
&:=
\left\|
\left(
\frac c4,
\frac{c+h+1}{2},
 a(h+1)^2+\frac{h+1}{2}+\frac c4
\right)
\right\|_{[-1,1]}.
\end{align*}
Thus, $\|(a,1,c)\|_h=\max\{f(a,c),g(a,c)\}$.
Formula~\eqref{AKformula} gives
\begin{equation}\label{f}
f(a,c)=
\begin{cases}
\left|\dfrac1{4a}-c\right|,& \text{if } (a,c)\in\mathcal R,\\[7pt]
\left|c+h+1+a(h+1)^2-\dfrac{a(2h+1)+1}{2}\right|
+\dfrac12|a(2h+1)+1|,& \text{if } (a,c)\notin\mathcal R,
\end{cases}
\end{equation}
where
\begin{align}\label{R1}
\mathcal R
:={}&
\left\{(a,c)\in\mathbb R^2:
-\frac1{2h}<a\le-\frac1{2h+1},\,
c>\frac1{8a}-\frac{a(h+1)^2}{2}-\frac{h+1}{2}
\right\}
\nonumber\\
&\cup
\left\{(a,c)\in\mathbb R^2:
-\frac1{2h+1}\le a<-\frac1{2(h+1)},\,
c>\frac1{8a}-\frac{h(ha+1)}{2}
\right\},
\end{align}
and
\begin{equation}\label{g}
g(a,c)=
\begin{cases}
(h+1)^2\left|\dfrac1{4c}-a\right|,& \text{if } (a,c)\in\mathcal S,\\
\dfrac12|c+h+1+2a(h+1)^2|+\dfrac12|c+h+1|,& \text{if } (a,c)\notin\mathcal S,
\end{cases}
\end{equation}
where
\begin{align}\label{R2}
\mathcal S
:={}&
\left\{(a,c)\in\mathbb R^2:
-(h+1)\le c<-\frac{h+1}{2},\, a>\frac1{8c}
\right\}
\nonumber\\
&\cup
\left\{(a,c)\in\mathbb R^2:
c\le-(h+1),\,
 a>\frac1{8c}-\frac{c+h+1}{2(h+1)^2}
\right\}.
\end{align}

\begin{figure}[t]
\centering
\resizebox{0.92\textwidth}{!}{%
\begin{tikzpicture}
\begin{axis}[
width=15cm,height=8.4cm,
xmin=-2.35,xmax=.22,ymin=-1.15,ymax=3,
axis lines=middle,
axis line style={black,semithick},
xlabel={$a$},ylabel={$c$},
label style={font=\normalsize},
tick label style={font=\small},
xtick={-1,0},ytick={-1,0,1,2,3},
clip=false,
]
\addplot[name path=topone,draw=none,domain=-2:-.6666667] {3};
\addplot[name path=lowone,draw=none,domain=-2:-.6666667,samples=200] {1/(8*x)-25*x/32-5/8};
\addplot[fill=black!18,draw=none] fill between[of=topone and lowone];
\addplot[name path=toptwo,draw=none,domain=-.6666667:-.4] {3};
\addplot[name path=lowtwo,draw=none,domain=-.6666667:-.4,samples=120] {1/(8*x)-x/32-1/8};
\addplot[fill=black!18,draw=none] fill between[of=toptwo and lowtwo];
\addplot[very thick,dashed,domain=-2:-.6666667,samples=200] {1/(8*x)-25*x/32-5/8};
\addplot[very thick,dashed,domain=-.6666667:-.4,samples=120] {1/(8*x)-x/32-1/8};
\draw[very thick,dashed] (axis cs:-2,.875) -- (axis cs:-2,3);
\draw[very thick,dashed] (axis cs:-.4,-.425) -- (axis cs:-.4,3);
\draw[gray,densely dashed] (axis cs:-.6666667,-.292) -- (axis cs:-.6666667,.15);
\node[font=\Huge] at (axis cs:-1.15,1.85) {$\mathcal R$};
\node[font=\small,align=center,fill=white,fill opacity=.88,text opacity=1,inner sep=2pt] at (axis cs:-1.23,1.02)
{$c=\dfrac1{8a}-\dfrac{a(h+1)^2}{2}-\dfrac{h+1}{2}$};
\draw[-{Stealth[length=2.2mm]}] (axis cs:-1.24,.88) -- (axis cs:-1.43,.35);
\node[font=\small,align=center,fill=white,fill opacity=.88,text opacity=1,inner sep=2pt] at (axis cs:-.67,-.91)
{$c=\dfrac1{8a}-\dfrac{h(ha+1)}{2}$};
\draw[-{Stealth[length=2.2mm]}] (axis cs:-.67,-.76) -- (axis cs:-.53,-.47);
\end{axis}
\end{tikzpicture}%
}
\caption{Sketch of $\mathcal R$ for $h=1/4$.}
\label{fig:Region_R}
\end{figure}

\begin{figure}[t]
\centering
\includegraphics[width=0.50\linewidth]{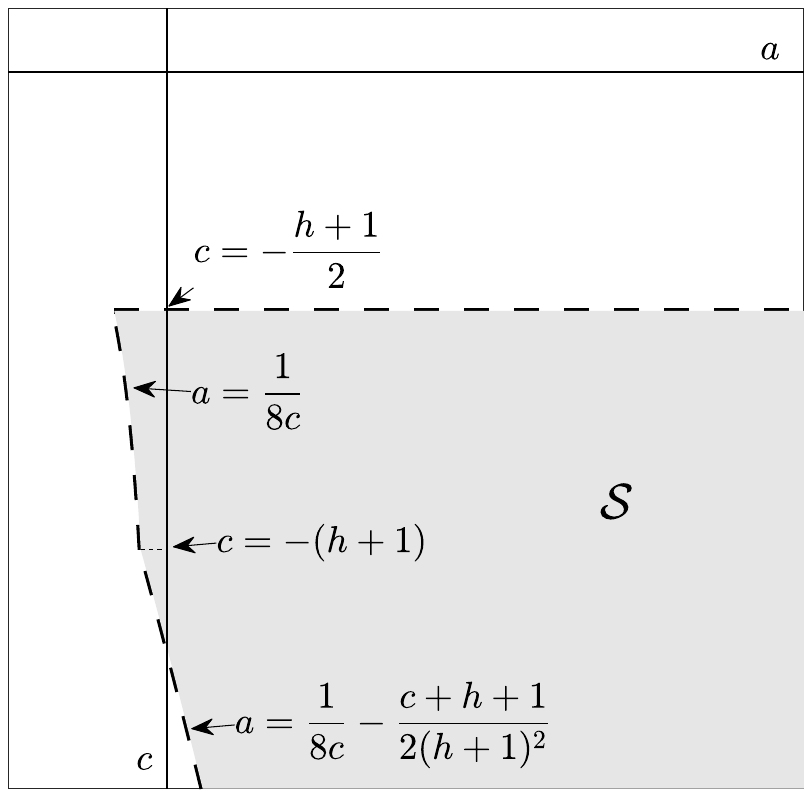}
\caption{Sketch of $\mathcal S$ for $h=1/4$.}
\label{fig:Region_S}
\end{figure}

Before comparing $f$ and $g$, we introduce the regions that occur in the final formula. Let
\[
\mathcal A
:=
\left\{(a,c)\in\mathbb R^2:
-\frac1{2h}<a<-\frac1{2(h+1)},\,
c+a(h+1)^2-\frac1{4a}\ge0
\right\}.
\]
Define $\mathcal B:=\mathcal B_1\cup\mathcal B_2$, where
\[
\mathcal B_1
:=
\left\{(a,c)\in\mathbb R^2:
-(h+1)\le c<-\dfrac{h+1}{2},\,
\displaystyle
a>\max\left\{\frac1{8c},
\frac{-c-h+\frac{(h+1)^2}{4c}}{h^2+(h+1)^2}
\right\}
\right\},
\]
\[
\mathcal B_2
:=
\left\{(a,c)\in\mathbb R^2:
 c\le-(h+1),\,
 a>\frac{-c-h+\frac{(h+1)^2}{4c}}{h^2+(h+1)^2}
\right\}.
\]
Next, let
\[
C_1
:=
\left\{(a,c)\in\mathbb R^2:
 a\le-\frac1{2h},\,
 c+a\left[h^2+(h+1)^2\right]+h\ge0
\right\},
\]
\[
C_2
:=
\left\{(a,c)\in\mathbb R^2:
 a\le-\frac1{2h+1},\, c\le-(h+1)
\right\},
\]
\[
C_3
:=
\left\{(a,c)\in\mathbb R^2:
\begin{array}{l}
\displaystyle a\ge-\frac1{2h+1},\, c+h-a(2h+1)\le0,\, c\le-\frac{h+1}{2},\,\\[3pt]
\displaystyle 
c^2+ch+ca\left[h^2+(h+1)^2\right]-\frac{(h+1)^2}{4}\ge0
\end{array}
\right\},
\]
and
\begin{equation}\label{C4}
C_4
:=
\left\{(a,c)\in\mathbb R^2:
\begin{array}{l}
\displaystyle c>-\frac{h+1}{2},\, c+h-a(2h+1)\le0,\\[3pt]
\displaystyle 2c+(2h+1)+a\left[h^2+(h+1)^2\right]\le0
\end{array}
\right\}.
\end{equation}
Finally, set
\[
\mathcal C:=C_1\cup C_2\cup C_3\cup C_4
\qquad \text{and} \qquad
\mathcal D:=(\mathcal A\cup\mathcal B\cup\mathcal C)^c.
\]

The component $C_4$ is nonempty exactly when
\[
0<h<\frac{\sqrt3-1}{2}=:h_0.
\]
Indeed, the inequalities in~\eqref{C4} admit a value of $c$ if and only if
\[
\frac{h-1}{2(2h+1)}<a<-\frac{h}{h^2+(h+1)^2},
\]
and this interval is nonempty precisely under the displayed condition.


\begin{figure}[t]
	\centering
	\resizebox{0.98\textwidth}{!}{%
		\begin{tikzpicture}
			
			\begin{axis}[
				name=mainplot,
				at={(0,0)},
				anchor=south west,
				width=10.45cm,
				height=7.65cm,
				xmin=-3,xmax=2.70,
				ymin=-3,ymax=4,
				axis lines=box,
				xtick={-2,-1,0,1,2},
				ytick={-2,-1,0,1,2,3},
				tick align=outside,
				tick label style={font=\scriptsize},
				axis line style={black,semithick},
				clip=true,
				enlargelimits=false,
				]
				\addplot[draw=none,fill=figD] coordinates {
					(-3,-3) (2.70,-3) (2.70,4) (-3,4)
				} -- cycle;
				
				\addplot[draw=none,fill=figA] coordinates {
					(-2.00000000,4.00000000)
					(-0.40000000,4.00000000)
					(-0.40000000,0.00000000)
					(-0.42318841,0.07047846)
					(-0.44637681,0.13739883)
					(-0.46956522,0.20128824)
					(-0.49275362,0.26257460)
					(-0.51594203,0.32160886)
					(-0.53913043,0.37868163)
					(-0.56231884,0.43403556)
					(-0.58550725,0.48787487)
					(-0.60869565,0.54037267)
					(-0.63188406,0.59167664)
					(-0.65507246,0.64191356)
					(-0.67826087,0.69119287)
					(-0.70144928,0.73960953)
					(-0.72463768,0.78724638)
					(-0.74782609,0.83417594)
					(-0.77101449,0.88046202)
					(-0.79420290,0.92616101)
					(-0.81739130,0.97132285)
					(-0.84057971,1.01599200)
					(-0.86376812,1.06020815)
					(-0.88695652,1.10400682)
					(-0.91014493,1.14741992)
					(-0.93333333,1.19047619)
					(-0.95652174,1.23320158)
					(-0.97971014,1.27561959)
					(-1.00289855,1.31775153)
					(-1.02608696,1.35961680)
					(-1.04927536,1.40123309)
					(-1.07246377,1.44261653)
					(-1.09565217,1.48378192)
					(-1.11884058,1.52474281)
					(-1.14202899,1.56551166)
					(-1.16521739,1.60609994)
					(-1.18840580,1.64651820)
					(-1.21159420,1.68677623)
					(-1.23478261,1.72688304)
					(-1.25797101,1.76684699)
					(-1.28115942,1.80667585)
					(-1.30434783,1.84637681)
					(-1.32753623,1.88595659)
					(-1.35072464,1.92542141)
					(-1.37391304,1.96477711)
					(-1.39710145,2.00402911)
					(-1.42028986,2.04318249)
					(-1.44347826,2.08224201)
					(-1.46666667,2.12121212)
					(-1.48985507,2.16009699)
					(-1.51304348,2.19890055)
					(-1.53623188,2.23762647)
					(-1.55942029,2.27627822)
					(-1.58260870,2.31485905)
					(-1.60579710,2.35337205)
					(-1.62898551,2.39182010)
					(-1.65217391,2.43020595)
					(-1.67536232,2.46853217)
					(-1.69855072,2.50680121)
					(-1.72173913,2.54501537)
					(-1.74492754,2.58317685)
					(-1.76811594,2.62128772)
					(-1.79130435,2.65934994)
					(-1.81449275,2.69736537)
					(-1.83768116,2.73533580)
					(-1.86086957,2.77326290)
					(-1.88405797,2.81114827)
					(-1.90724638,2.84899344)
					(-1.93043478,2.88679984)
					(-1.95362319,2.92456887)
					(-1.97681159,2.96230184)
					(-2.00000000,3.00000000)
					(-2.00000000,3.00000000)
				} -- cycle;
				
				\addplot[draw=none,fill=figCOne] coordinates {
					(-34/13,4) (-2,3) (-2,4)
				} -- cycle;
				
				\addplot[draw=none,fill=figCTwo] coordinates {
					(-3,-3) (-2/3,-3) (-2/3,-5/4) (-3,-5/4)
				} -- cycle;
				
				
				\addplot[draw=none,fill=figCThree] coordinates {
					(-0.66666667,-3.00000000)
					(1.61217949,-3.00000000)
					(1.58662738,-2.96022727)
					(1.56104596,-2.92045455)
					(1.53543399,-2.88068182)
					(1.50979021,-2.84090909)
					(1.48411325,-2.80113636)
					(1.45840168,-2.76136364)
					(1.43265398,-2.72159091)
					(1.40686856,-2.68181818)
					(1.38104369,-2.64204545)
					(1.35517757,-2.60227273)
					(1.32926829,-2.56250000)
					(1.30331380,-2.52272727)
					(1.27731193,-2.48295455)
					(1.25126037,-2.44318182)
					(1.22515664,-2.40340909)
					(1.19899812,-2.36363636)
					(1.17278199,-2.32386364)
					(1.14650524,-2.28409091)
					(1.12016465,-2.24431818)
					(1.09375676,-2.20454545)
					(1.06727787,-2.16477273)
					(1.04072398,-2.12500000)
					(1.01409081,-2.08522727)
					(0.98737374,-2.04545455)
					(0.96056776,-2.00568182)
					(0.93366749,-1.96590909)
					(0.90666708,-1.92613636)
					(0.87956020,-1.88636364)
					(0.85233997,-1.84659091)
					(0.82499890,-1.80681818)
					(0.79752884,-1.76704545)
					(0.76992087,-1.72727273)
					(0.74216524,-1.68750000)
					(0.71425127,-1.64772727)
					(0.68616719,-1.60795455)
					(0.65790007,-1.56818182)
					(0.62943562,-1.52840909)
					(0.60075802,-1.48863636)
					(0.57184972,-1.44886364)
					(0.54269118,-1.40909091)
					(0.51326060,-1.36931818)
					(0.48353356,-1.32954545)
					(0.45348264,-1.28977273)
					(0.42307692,-1.25000000)
					(-0.66666667,-1.25000000)
				} -- cycle;
				
				\addplot[draw=none,fill=figCThree] coordinates {
					(-0.66666667,-1.25000000)
					(0.42307692,-1.25000000)
					(0.41212523,-1.23579545)
					(0.40112213,-1.22159091)
					(0.39006582,-1.20738636)
					(0.37895438,-1.19318182)
					(0.36778583,-1.17897727)
					(0.35655808,-1.16477273)
					(0.34526893,-1.15056818)
					(0.33391608,-1.13636364)
					(0.32249712,-1.12215909)
					(0.31100950,-1.10795455)
					(0.29945055,-1.09375000)
					(0.28781745,-1.07954545)
					(0.27610723,-1.06534091)
					(0.26431676,-1.05113636)
					(0.25244276,-1.03693182)
					(0.24048174,-1.02272727)
					(0.22843002,-1.00852273)
					(0.21628372,-0.99431818)
					(0.20403871,-0.98011364)
					(0.19169066,-0.96590909)
					(0.17923494,-0.95170455)
					(0.16666667,-0.93750000)
					(0.15398063,-0.92329545)
					(0.14117133,-0.90909091)
					(0.12823288,-0.89488636)
					(0.11515903,-0.88068182)
					(0.10194314,-0.86647727)
					(0.08857809,-0.85227273)
					(0.07505630,-0.83806818)
					(0.06136966,-0.82386364)
					(0.04750951,-0.80965909)
					(0.03346653,-0.79545455)
					(0.01923077,-0.78125000)
					(0.00479150,-0.76704545)
					(-0.00986278,-0.75284091)
					(-0.02474449,-0.73863636)
					(-0.03986700,-0.72443182)
					(-0.05524476,-0.71022727)
					(-0.07089339,-0.69602273)
					(-0.08682984,-0.68181818)
					(-0.10307246,-0.66761364)
					(-0.11964123,-0.65340909)
					(-0.13655789,-0.63920455)
					(-0.15384615,-0.62500000)
					(-0.25000000,-0.62500000)
				} -- cycle;
				
				\addplot[draw=none,fill=figB] coordinates {
					(1.61217949,-3.00000000)
					(2.70000000,-3.00000000)
					(2.70000000,-0.62500000)
					(-0.15384615,-0.62500000)
					(-0.11769424,-0.65506329)
					(-0.08309141,-0.68512658)
					(-0.04984231,-0.71518987)
					(-0.01778313,-0.74525316)
					(0.01322457,-0.77531646)
					(0.04329852,-0.80537975)
					(0.07253954,-0.83544304)
					(0.10103441,-0.86550633)
					(0.12885828,-0.89556962)
					(0.15607653,-0.92563291)
					(0.18274631,-0.95569620)
					(0.20891781,-0.98575949)
					(0.23463525,-1.01582278)
					(0.25993781,-1.04588608)
					(0.28486024,-1.07594937)
					(0.30943356,-1.10601266)
					(0.33368548,-1.13607595)
					(0.35764085,-1.16613924)
					(0.38132203,-1.19620253)
					(0.40474919,-1.22626582)
					(0.42794057,-1.25632911)
					(0.45091269,-1.28639241)
					(0.47368058,-1.31645570)
					(0.49625791,-1.34651899)
					(0.51865718,-1.37658228)
					(0.54088979,-1.40664557)
					(0.56296621,-1.43670886)
					(0.58489605,-1.46677215)
					(0.60668813,-1.49683544)
					(0.62835060,-1.52689873)
					(0.64989095,-1.55696203)
					(0.67131614,-1.58702532)
					(0.69263258,-1.61708861)
					(0.71384623,-1.64715190)
					(0.73496261,-1.67721519)
					(0.75598687,-1.70727848)
					(0.77692379,-1.73734177)
					(0.79777781,-1.76740506)
					(0.81855312,-1.79746835)
					(0.83925357,-1.82753165)
					(0.85988282,-1.85759494)
					(0.88044427,-1.88765823)
					(0.90094109,-1.91772152)
					(0.92137629,-1.94778481)
					(0.94175268,-1.97784810)
					(0.96207289,-2.00791139)
					(0.98233941,-2.03797468)
					(1.00255459,-2.06803797)
					(1.02272063,-2.09810127)
					(1.04283961,-2.12816456)
					(1.06291351,-2.15822785)
					(1.08294417,-2.18829114)
					(1.10293335,-2.21835443)
					(1.12288273,-2.24841772)
					(1.14279387,-2.27848101)
					(1.16266827,-2.30854430)
					(1.18250735,-2.33860759)
					(1.20231245,-2.36867089)
					(1.22208484,-2.39873418)
					(1.24182575,-2.42879747)
					(1.26153633,-2.45886076)
					(1.28121768,-2.48892405)
					(1.30087083,-2.51898734)
					(1.32049680,-2.54905063)
					(1.34009653,-2.57911392)
					(1.35967093,-2.60917722)
					(1.37922087,-2.63924051)
					(1.39874716,-2.66930380)
					(1.41825061,-2.69936709)
					(1.43773196,-2.72943038)
					(1.45719194,-2.75949367)
					(1.47663124,-2.78955696)
					(1.49605052,-2.81962025)
					(1.51545041,-2.84968354)
					(1.53483153,-2.87974684)
					(1.55419444,-2.90981013)
					(1.57353972,-2.93987342)
					(1.59286790,-2.96993671)
					(1.61217949,-3.00000000)
				} -- cycle;
				
				\addplot[draw=black,line width=.65pt,fill=figCFour] coordinates {
					(-1/4,-5/8) (-2/13,-5/8) (-8/37,-85/148)
				} -- cycle;

				\addplot[black,thick,domain=-2:-0.4,samples=240]
				{1/(4*x)-25*x/16};
				\addplot[black,thick,domain=-34/13:-2]
				{-13*x/8-1/4};
				\draw[black,thick] (axis cs:-2,3)--(axis cs:-2,4);
				\draw[black,thick] (axis cs:-2/3,-3)--(axis cs:-2/3,-5/4);
				\draw[black,thick] (axis cs:-3,-5/4)--(axis cs:-2/3,-5/4);
				\addplot[black,thick,domain=-3:-0.625,variable=\t,samples=260]
				({-8*\t/13-2/13+25/(104*\t)},{\t});
				\addplot[black,thick,domain=-1.25:-0.625,variable=\t]
				({2*(\t+0.25)/3},{\t});
				\addplot[black,thick,domain=-0.31:-0.09] {-.625};
				\addplot[black,thick,domain=-0.31:-0.09] {1.5*x-.25};
				\addplot[black,thick,domain=-0.31:-0.09] {-.75-13*x/16};
				
				\draw[black!45,line width=.45pt] (axis cs:0,-3)--(axis cs:0,4);
				\draw[black!45,line width=.45pt] (axis cs:-3,0)--(axis cs:2.70,0);
				
				\draw[black!38,densely dashed]
				(axis cs:-2/3,-3)--(axis cs:-2/3,4);
				\draw[black!38,densely dashed]
				(axis cs:-0.4,-3)--(axis cs:-0.4,4);
				
				\node[font=\normalsize] at (axis cs:-1.22,2.28) {$\mathcal A$};
				\node[font=\normalsize] at (axis cs:1.52,-2.18) {$\mathcal B$};
				\node[font=\small] at (axis cs:-2.15,3.6) {$C_1$};
				\node[font=\normalsize] at (axis cs:-1.65,-2.18) {$C_2$};
				\node[font=\normalsize] at (axis cs:0.30,-1.76) {$C_3$};
				\node[font=\normalsize] at (axis cs:1.42,1.38) {$\mathcal D$};
				
				\draw[
				preaction={draw=white,line width=4.0pt},
				draw=figZoom,line width=1.45pt,
				dash pattern=on 4.5pt off 2.2pt,
				fill=figCFour,fill opacity=.12
				]
				(axis cs:-0.31,-0.70) rectangle (axis cs:-0.09,-0.49);
				
			\end{axis}
			
			\begin{axis}[
				name=zoomplot,
				at={(9.25cm,1.20cm)},
				anchor=south west,
				width=5.10cm,
				height=5.10cm,
				xmin=-0.31,xmax=-0.09,
				ymin=-0.70,ymax=-0.49,
				axis lines=box,
				xtick=\empty,
				ytick=\empty,
				axis line style={figZoom,semithick},
				clip=true,
				enlargelimits=false,
				title={\strut Component $C_4$},
				title style={font=\small,yshift=-1mm},
				]
				
				\addplot[draw=none,fill=figD] coordinates {
					(-0.31,-0.70) (-0.09,-0.70) (-0.09,-0.49) (-0.31,-0.49)
				} -- cycle;
				
				\addplot[draw=none,fill=figCThree] coordinates {
					(-0.30000000,-0.70000000)
					(-0.07025641,-0.70000000)
					(-0.07433900,-0.69600000)
					(-0.07844433,-0.69200000)
					(-0.08257280,-0.68800000)
					(-0.08672485,-0.68400000)
					(-0.09090090,-0.68000000)
					(-0.09510141,-0.67600000)
					(-0.09932682,-0.67200000)
					(-0.10357759,-0.66800000)
					(-0.10785418,-0.66400000)
					(-0.11215705,-0.66000000)
					(-0.11648670,-0.65600000)
					(-0.12084364,-0.65200000)
					(-0.12522839,-0.64800000)
					(-0.12964147,-0.64400000)
					(-0.13408343,-0.64000000)
					(-0.13855482,-0.63600000)
					(-0.14304620,-0.63200000)
					(-0.14755825,-0.62800000)
					(-0.15384615,-0.62500000)
					(-0.25000000,-0.62500000)
				} -- cycle;
				
				\addplot[draw=black,line width=.8pt,fill=figCFour] coordinates {
					(-1/4,-5/8) (-2/13,-5/8) (-8/37,-85/148)
				} -- cycle;
				\addplot[black,thick,domain=-0.31:-0.09] {-.625};
				\addplot[black,thick,domain=-0.31:-0.09] {1.5*x-.25};
				\addplot[black,thick,domain=-0.31:-0.09] {-.75-13*x/16};
				
				\addplot[only marks,mark=*,mark size=3.7pt,
				mark options={draw=white,fill=white}] coordinates {(-0.2,-0.6)};
				\addplot[only marks,mark=*,mark size=2.25pt,
				mark options={draw=figQ,fill=figQ}] coordinates {(-0.2,-0.6)};
				
			\end{axis}
			
			\begin{scope}[shift={(4.64cm,-2.82cm)}]
				\draw[black!55,rounded corners=1.5pt,fill=white]
				(0,0) rectangle (6.55,2.02);
				
				\node[font=\scriptsize,anchor=center] at (3.275,1.70)
				{$\mathcal C=C_1\cup C_2\cup C_3\cup C_4$};
				
				\fill[figA] (.24,1.02) rectangle (.54,1.26);
				\draw[black!50] (.24,1.02) rectangle (.54,1.26);
				\node[font=\scriptsize,anchor=west] at (.66,1.14) {$\mathcal A$};
				
				\fill[figB] (1.76,1.02) rectangle (2.06,1.26);
				\draw[black!50] (1.76,1.02) rectangle (2.06,1.26);
				\node[font=\scriptsize,anchor=west] at (2.18,1.14) {$\mathcal B$};
				
				\fill[figD] (3.28,1.02) rectangle (3.58,1.26);
				\draw[black!50] (3.28,1.02) rectangle (3.58,1.26);
				\node[font=\scriptsize,anchor=west] at (3.70,1.14) {$\mathcal D$};
				
				\fill[figQ] (4.84,1.14) circle[radius=.075];
				\draw[white,line width=.7pt] (4.84,1.14) circle[radius=.075];
				\node[font=\scriptsize,anchor=west] at (5.02,1.14) {$Q$};
				
				\fill[figCOne] (.24,.34) rectangle (.54,.58);
				\draw[black!50] (.24,.34) rectangle (.54,.58);
				\node[font=\scriptsize,anchor=west] at (.66,.46) {$C_1$};
				
				\fill[figCTwo] (1.76,.34) rectangle (2.06,.58);
				\draw[black!50] (1.76,.34) rectangle (2.06,.58);
				\node[font=\scriptsize,anchor=west] at (2.18,.46) {$C_2$};
				
				\fill[figCThree] (3.28,.34) rectangle (3.58,.58);
				\draw[black!50] (3.28,.34) rectangle (3.58,.58);
				\node[font=\scriptsize,anchor=west] at (3.70,.46) {$C_3$};
				
				\fill[figCFour] (4.84,.34) rectangle (5.14,.58);
				\draw[black!50] (4.84,.34) rectangle (5.14,.58);
				\node[font=\scriptsize,anchor=west] at (5.26,.46) {$C_4$};
			\end{scope}
			
		\end{tikzpicture}%
	}
	\caption{Regions $\mathcal A$, $\mathcal B$, $\mathcal C$, and
		$\mathcal D$ for $h=1/4$, with
		$\mathcal C=C_1\cup C_2\cup C_3\cup C_4$.
		The red-framed neighborhood in the left panel is magnified on the right.
		The blue point in the magnification is
		$Q=\left(-1/5,-3/5\right)\in C_4$.}
	\label{fig:regions}
\end{figure}


We can now state the norm formula for $b=1$.

\begin{Lemma}\label{comparisonlemma}
For every $(a,c)\in\mathbb R^2$, $\|(a,1,c)\|_h=\max\{f(a,c),g(a,c)\}$.
Moreover,
\[
\|(a,1,c)\|_h=
\begin{cases}
\displaystyle c-\frac1{4a},
& \text{if } (a,c)\in\mathcal A,\\[10pt]
\displaystyle (h+1)^2\left(a-\frac1{4c}\right),
& \text{if } (a,c)\in\mathcal B,\\[10pt]
\displaystyle
\left|c+h+1+a(h+1)^2-\frac{a(2h+1)+1}{2}\right|
+\frac12|a(2h+1)+1|,
& \text{if } (a,c)\in\mathcal C,\\[12pt]
\displaystyle
\frac12|c+h+1+2a(h+1)^2|+\frac12|c+h+1|,
& \text{if } (a,c)\in\mathcal D.
\end{cases}
\]
\end{Lemma}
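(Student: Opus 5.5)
The first assertion is already established: the reduction to the set $M$ via $2$-homogeneity, followed by the affine changes of variables $T$, shows $\|(a,1,c)\|_h=\max\{f(a,c),g(a,c)\}$. So the plan is to compute $\max\{f,g\}$ explicitly, using the piecewise formulas \eqref{f} and \eqref{g}. Write $F_1=c-\frac1{4a}$ and $F_2$ for the ``otherwise'' branch of $f$. Similarly write $G_1=(h+1)^2(a-\frac1{4c})$ and $G_2$ for the ``otherwise'' branch of $g$. The plan is to partition the $ac$-plane by the four cases $(a,c)\in\mathcal R$ or not, and $(a,c)\in\mathcal S$ or not. In each cell we compare the two active expressions and identify which one dominates; the region where each expression wins should be exactly $\mathcal A$, $\mathcal B$, $\mathcal C$, $\mathcal D$.

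First, we check that $\mathcal R\cap\mathcal S=\emptyset$, or at least that $F_1\ge G_1$ or the reverse holds there. On $\mathcal R$ we have $a<0$ and $c$ large, while on $\mathcal S$ we have $c<0$ and $a>1/(8c)$. The intersection is nonempty only in a thin zone, and there a direct check should decide the comparison. Next, we use the general fact that the Aron--Klimek ``otherwise'' expression $|A+C|+|B|$ is always at least the endpoint values. It is exactly $\max\{|P(-1)|,|P(1)|\}$. Hence $F_2$ equals $\max$ of $|a(h+1)^2+(h+1)+c|$ and $|ah^2+h+c|$, i.e.\ the values at the corners $(h+1,1)$ and $(h,1)$. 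Likewise $G_2=\max\{|a(h+1)^2|,|a(h+1)^2+h+1+c|\}$, the values at $(h+1,0)$ and $(h+1,1)$. This corner interpretation is the key simplification. Outside $\mathcal R\cup\mathcal S$, the norm is the maximum of the three corner values $|ah^2+h+c|$, $|a(h+1)^2+h+1+c|$ and $|a(h+1)^2|$. Comparing these linear expressions gives the lines $c+a[h^2+(h+1)^2]+h=0$, $c+h-a(2h+1)=0$ and $2c+(2h+1)+a[h^2+(h+1)^2]=0$ that bound $C_1,C_3,C_4$. On $\mathcal D$ the corner $(h+1,\cdot)$ values win, and on $\mathcal C$ the corner $(h,1)$ value wins.

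Inside $\mathcal R$, the interior critical value $F_1$ dominates the endpoint values of $f$. We compare $F_1$ with $G_2$; the boundary $F_1=|a(h+1)^2|$ is precisely $c+a(h+1)^2-\frac1{4a}=0$, which cuts $\mathcal R$ down to $\mathcal A$. Note that on $\mathcal R$ the value $F_1$ is positive, since the interior extremum is a maximum there because $a<0$. Symmetrically, inside $\mathcal S$ we compare $G_1$ with $F_2$. The critical equation $G_1=|ah^2+h+c|$ reduces, after clearing $4c$, to $a=\frac{-c-h+\frac{(h+1)^2}{4c}}{h^2+(h+1)^2}$. This cuts $\mathcal S$ down to $\mathcal B$. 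The same quadratic $c^2+ch+ca[h^2+(h+1)^2]-\frac{(h+1)^2}4\ge0$ is what defines the part of $\mathcal S$ belonging to $C_3$.

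The main obstacle is the bookkeeping. We must verify that the pieces fit together, i.e.\ that $\mathcal A,\mathcal B,\mathcal C$ are pairwise disjoint up to boundaries where the formulas agree. We must also verify that points of $\mathcal R\setminus\mathcal A$ and $\mathcal S\setminus\mathcal B$ land in $\mathcal C$ or $\mathcal D$ with the correct corner winning. The most delicate part is handling the signs of the absolute values near the small component $C_4$, which is nonempty only for $h<h_0$. I would treat this by fixing $a$ in each of the intervals cut out by $-\frac1{2h}$, $-\frac1{2h+1}$ and $-\frac1{2(h+1)}$, and then tracking monotonicity in $c$ of the differences of the competing expressions. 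Each difference is either linear in $c$ or, for $G_1$ versus linear terms, is a rational function with a single sign change on the relevant $c$-interval. This guarantees that the dividing curves are exactly the ones in the definitions.
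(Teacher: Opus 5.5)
Your route is correct. It uses the same three-way split as the paper ($\mathcal R$, $\mathcal S$, and the complement of $\mathcal R\cup\mathcal S$), but organizes the comparison differently.

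The paper works with explicit differences:
\begin{itemize}
\item on $\mathcal R$ it splits on the sign of $c+h+1+2a(h+1)^2$;
\item on $\mathcal S$ it splits on the sign of $s=2c+(2h+1)+a[h^2+(h+1)^2]$, obtaining $g-f=q(a,c)/c$;
\item off $\mathcal R\cup\mathcal S$ it runs a nine-row sign table in $s,u,v,w$ using $s+u=v+w$.
\end{itemize}
Your identity $|A+C|+|B|=\max\{|P(1)|,|P(-1)|\}$ explains that table. Write $p(x)=ax^2+x+c$. Then $s=p(h)+p(h+1)$, $u=p(h+1)-p(h)$, $v=p(h+1)+a(h+1)^2$ and $w=p(h+1)-a(h+1)^2$. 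So the table is just an ordering of the three corner values $|p(h)|$, $|p(h+1)|$, $|a(h+1)^2|$.

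This gains you two things. First, the argument on $\mathcal R$ is shorter: $F_1\ge|p(h+1)|$ automatically, so $F_1\ge G_2$ iff $F_1\ge -a(h+1)^2$, which is exactly the inequality defining $\mathcal A$, with no sign split. On $\mathcal S$, only $G_1$ versus $|p(h)|$ matters. Second, it identifies the straight lines in the definitions of $C_1,\dots,C_4$ (other than those inherited from $\mathcal R$ and $\mathcal S$) as the equalities $p(h)=\pm a(h+1)^2$, $p(h)=\pm p(h+1)$ and $p(h+1)=a(h+1)^2$. In exchange, the paper's table makes the assignment of each sign cell to $C_1,\dots,C_4$ or $\mathcal D$ completely mechanical.

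Several of your bookkeeping claims need correcting.
\begin{enumerate}
\item $\mathcal R\cap\mathcal S=\emptyset$ outright: $c>-(h+1)/2$ on $\mathcal R$ and $c<-(h+1)/2$ on $\mathcal S$.
\item ``On $\mathcal C$ the corner $(h,1)$ value wins'' is false on $C_2$. There $u\le0$, $w\le0$, $s<0$ and $v<0$, which give $|p(h+1)|\ge\max\{|p(h)|,|a(h+1)^2|\}$, hence $f=g$. What must actually be verified off $\mathcal R\cup\mathcal S$ is:
\begin{itemize}
\item $\max\{|p(h)|,|p(h+1)|\}\ge|a(h+1)^2|$ on $\mathcal C$ (this is $F_2\ge G_2$);
\item $\max\{|a(h+1)^2|,|p(h+1)|\}\ge|p(h)|$ on $\mathcal D$ (this is $G_2\ge F_2$).
\end{itemize}
Inside the zone where the corner $(h+1,1)$ dominates, $f=g$, so the border between $C_2$ and $\mathcal D$ there is only a matter of convention.
\item On $\mathcal S$ the equation $G_1=|p(h)|$ also has the branch $G_1=p(h)$, which you must rule out. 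On $\mathcal S$ one has $u=a(2h+1)+1>0$, so $p(h)\ge0$ forces $|p(h)|<p(h+1)\le G_1$. Hence only $G_1=-p(h)$, i.e.\ $q(a,c)=0$, is a genuine boundary.
\item ``Cuts $\mathcal R$ down to $\mathcal A$'' presupposes $\mathcal A\subseteq\mathcal R$, and the analogous step on $\mathcal S$ presupposes $\mathcal B_2\subseteq\mathcal S$. Both hold, but they need a check, since $\mathcal A$ and $\mathcal B_2$ are defined without reference to $\mathcal R$ and $\mathcal S$.
\end{enumerate}
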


\begin{proof}
The identity $\|(a,1,c)\|_h=\max\{f(a,c),g(a,c)\}$ was proved in the previous calculations.
It remains to compare the explicit expressions in~\eqref{f} and~\eqref{g}.

Set $H:=h+1$, $\kappa:=2h+1$ $L:=h^2+(h+1)^2$ and 
\[
q(a,c):=c^2+ch+caL-\frac{H^2}{4}.
\]

Assume first that $(a,c)\in\mathcal R$. Then, $a<0$ and $c>-H/2$, so $(a,c)\notin\mathcal S$ and
\[
f(a,c)=c-\frac1{4a},
\qquad
g(a,c)=\frac12|c+H+2aH^2|+\frac12(c+H).
\]
If $c+H+2aH^2\ge0$, then
\[
f(a,c)-g(a,c)=-\frac{(2aH+1)^2}{4a}\ge0,
\]
and the defining inequality of $\mathcal A$ is automatic. If $c+H+2aH^2<0$, then
\[
f(a,c)-g(a,c)=c+aH^2-\frac1{4a}.
\]
Thus, $f\ge g$ precisely on $\mathcal A$, and $g>f$ on $\mathcal R\setminus\mathcal A$.

Assume next that $(a,c)\in\mathcal S$. Then, $(a,c)\notin\mathcal R$, $c<0$, and $u:=a\kappa+1>0$. Put $s:=2c+\kappa+aL$.
If $s\ge0$, then $f(a,c)=c+H+aH^2$ and
\[
g(a,c)-f(a,c)=-\frac{(2c+H)^2}{4c}>0.
\]
Moreover,
\[
q(a,c)=cs-\left(c+\frac H2\right)^2<0.
\]
If $s<0$, then direct simplification gives
\[
g(a,c)-f(a,c)=\frac{q(a,c)}{c}.
\]
Since $c<0$, one has $g>f$ exactly when $q(a,c)<0$. Therefore, $\mathcal B=\mathcal S\cap\{(a,c):q(a,c)<0\}$, $g>f$ on $\mathcal B$, and $f\ge g$ on $\mathcal S\setminus\mathcal B$. The latter set is contained in $C_3$.

Finally, assume that $(a,c)\notin\mathcal R\cup\mathcal S$. Put $s:=2c+\kappa+aL$, $u:=a\kappa+1$, $v:=c+H+2aH^2$ and $w:=c+H$.
Then, $2f=|s|+|u|$, $2g=|v|+|w|$, $s+u=v+w$.
On the open sign cells, the possible patterns and the corresponding values of $2(f-g)$ are
\[
\renewcommand{\arraystretch}{1.15}
\begin{array}{c|c|c|c|c}
s&u&v&w&2(f-g)\\ \hline
-&-&-&-&0\\
-&-&-&+&-2w\\
-&+&-&-&2u\\
-&+&-&+&2(u-w)\\
-&+&+&+&-2s\\
+&-&-&+&2(s-w)\\
+&-&+&+&-2u\\
+&+&-&+&2v\\
+&+&+&+&0
\end{array}
\]
(the cases in which one of $s,u,v,w$ vanishes follow by continuity). Here, $s-w=c+h+aL$ and $u-w=a\kappa-c-h$.
Combining the table with the conditions $(a,c)\notin\mathcal R\cup\mathcal S$ gives the following assignment. The first row is $C_2$; the third row is contained in $C_3$; the fifth row is $C_4$, together with the boundary $c=-H/2$ assigned to $C_3$; the seventh row is contained in $C_1$. In the fourth row one has $f\ge g$ exactly when $u\ge w$, which gives $C_3$ or $C_4$ according as $c\le-H/2$ or $c>-H/2$. In the sixth row one has $f\ge g$ exactly when $s\ge w$, which gives $C_1$. The remaining rows belong to $\mathcal D$ and satisfy $g\ge f$.

Thus, the set on which the third formula is active is precisely $\mathcal C$, and the complementary set is $\mathcal D$. This proves the formula.
\end{proof}

Consequently, we obtain the formula for arbitrary $b\neq0$.

\begin{Theorem}\label{norm}
Let $b\neq0$. Then,
\[
\|(a,b,c)\|_h=
\begin{cases}
\displaystyle
\operatorname{sgn}(b)\left(c-\frac{b^2}{4a}\right),
& \text{if } \left(\dfrac ab,\dfrac cb\right)\in\mathcal A,\\[12pt]
\displaystyle
\operatorname{sgn}(b)(h+1)^2\left(a-\frac{b^2}{4c}\right),
& \text{if } \left(\dfrac ab,\dfrac cb\right)\in\mathcal B,\\[12pt]
\displaystyle
\left|c+b(h+1)+a(h+1)^2-\frac{a(2h+1)+b}{2}\right|
+\frac12|a(2h+1)+b|,
& \text{if } \left(\dfrac ab,\dfrac cb\right)\in\mathcal C,\\[12pt]
\displaystyle
\frac12|c+b(h+1)+2a(h+1)^2|+\frac12|c+b(h+1)|,
& \text{if } \left(\dfrac ab,\dfrac cb\right)\in\mathcal D.
\end{cases}
\]
\end{Theorem}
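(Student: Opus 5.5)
The plan is to reduce to the case $b=1$ using the homogeneity identity \eqref{111} and then read off the four cases from Lemma~\ref{comparisonlemma}. For $b\neq0$, \eqref{111} gives $\|(a,b,c)\|_h=|b|\,\|(a/b,1,c/b)\|_h$. Set $\alpha=a/b$ and $\gamma=c/b$. Lemma~\ref{comparisonlemma} determines which of the four formulas applies according to whether $(\alpha,\gamma)$ lies in $\mathcal A$, $\mathcal B$, $\mathcal C$ or $\mathcal D$. These sets partition $\mathbb R^2$ by the definition of $\mathcal D$; where they overlap on boundaries, the lemma's formulas agree because they come from the same maximum. So I would substitute each expression, multiply by $|b|$, and simplify.

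In the regions $\mathcal C$ and $\mathcal D$ the formulas are sums of absolute values of linear expressions in $(\alpha,1,\gamma)$. Multiplying by $|b|$ and using $|b|\,|t|=|bt|$ turns each term into the same linear expression in $(a,b,c)$. For $\mathcal C$ this gives $|c+b(h+1)+a(h+1)^2-\tfrac{a(2h+1)+b}{2}|+\tfrac12|a(2h+1)+b|$, and for $\mathcal D$ it gives $\tfrac12|c+b(h+1)+2a(h+1)^2|+\tfrac12|c+b(h+1)|$. These are exactly the stated formulas, so nothing more is needed in these two cases.

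The regions $\mathcal A$ and $\mathcal B$ carry no absolute values, so there the sign of $b$ matters. On $\mathcal A$ the lemma gives $\gamma-\tfrac1{4\alpha}$, which is nonnegative since it equals a norm. Multiplying by $|b|=\operatorname{sgn}(b)\,b$ yields $\operatorname{sgn}(b)\bigl(c-\tfrac{b^2}{4a}\bigr)$, because $b/(4\alpha)=b^2/(4a)$. Note that $a\neq0$ here, since $\mathcal A$ forces $\alpha<0$. Likewise, on $\mathcal B$ we have $\gamma<0$, hence $c\neq0$, and $|b|(h+1)^2\bigl(\alpha-\tfrac1{4\gamma}\bigr)=\operatorname{sgn}(b)(h+1)^2\bigl(a-\tfrac{b^2}{4c}\bigr)$.

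The proof is therefore a direct corollary of Lemma~\ref{comparisonlemma}. The only step needing any care is the sign bookkeeping in the $\mathcal A$ and $\mathcal B$ cases, together with checking that the denominators $a$ and $c$ do not vanish there. Both points follow from the defining inequalities of $\mathcal A$ and $\mathcal B$, so I expect no real obstacle.
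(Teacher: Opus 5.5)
Your proposal is correct and is exactly the paper's argument: apply Lemma~\ref{comparisonlemma} to $(a/b,c/b)$, multiply by $|b|$ using \eqref{111}, and simplify. Your sign bookkeeping $|b|=\operatorname{sgn}(b)\,b$ on $\mathcal A$ and $\mathcal B$, together with the check that $a\neq0$ on $\mathcal A$ and $c\neq0$ on $\mathcal B$, fills in what the paper calls ``direct simplification.'' The nonnegativity of $\gamma-\frac1{4\alpha}$ that you mention is harmless but not needed, since $|b|X=\operatorname{sgn}(b)\,bX$ for every real $X$.
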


\begin{proof}
Apply Lemma~\ref{comparisonlemma} to $(a/b,c/b)$ and multiply by $|b|$ in~\eqref{111}. Each of the four formulas follows by direct simplification.
\end{proof}

\section{Projection of ${\mathsf B}_h$ onto the $ac$-plane}\label{sec:Projection}

For the rest of this paper, let us denote for simplicity
	$$
	a_h := \frac{1}{(h+1)^2}.
	$$
Let us define two functions $\Gamma_h:\left[-a_h,a_h\right]\longrightarrow \mathbb{R}$ and $\Lambda_h:\left[-a_h,a_h\right]\longrightarrow \mathbb{R}$ by
\begin{align*}
	\Gamma_h(a) & := \left[\sqrt{1+h^2 a_h}+h\sqrt{a+a_h}\right]^2,\\
	\Lambda_h(a) & := -\Gamma_h(-a).
\end{align*}
Also, in this section we need to define the sets $A_h$, $B_h$, $C_h$, and $D_h$.
These definitions depend on the parameter $h$, and we have to consider two cases depending on whether $h\leq h_0$ or $h\geq h_0$.

On the one hand, if $h\leq h_0$, then
\begin{align*}
	A_h & := \left\{ (a,c)\in\mathbb{R}^2 :  -a_h \leq a\leq \frac{2h}{h+1},\, \max\{1+h^2a,1+(h^2+h)a\}\leq c\leq\Gamma_h(a) \right\}\\
	&\quad\cup \left\{ (a,c)\in\mathbb{R}^2 : \frac{2h}{h+1}\leq a\leq a_h,\, 1+(h^2+h)a \leq c\leq 2h+1+(h^2+h)a \right\},\\
	B_h & := \left\{ (a,c)\in\mathbb{R}^2 : -a_h\leq a\leq0,\, 1+a(h+1)^2\leq c\leq1+ah^2\right\},\\
	C_h & := \left\{ (a,c)\in\mathbb{R}^2 : -a_h\leq a\leq a_h,\right.\\
	& \qquad \left. \max\{-1+a(h+1)^2,ah(h+1)-2h-1\}\leq c\leq\min\{1+a(h+1)^2,1+ah(h+1)\}\right\},\\
	D_h & := \left\{ (a,c)\in\mathbb{R}^2 : -\frac{2h}{h+1}\leq a\leq a_h,\, \Lambda_h(a)\leq c\leq-1+a(h+1)^2\right\}.
\end{align*}
The sets $A_h$, $B_h$, $C_h$, and $D_h$ have been represented in Figure~\ref{fig:proj_less_h0} for the case $h\leq h_0$.

On the other hand, if $h\geq h_0$, then
\begin{align*}
	A_h & := \left\{ (a,c)\in\mathbb{R}^2 : -a_h\leq a\leq a_h,\, \max\{1+h^2a,1+(h^2+h)a\}\leq c\leq\Gamma_h(a)\right\},\\
	B_h & := \left\{ (a,c)\in\mathbb{R}^2 : -a_h\leq a\leq0,\, 1+a(h+1)^2\leq c\leq1+ah^2\right\},\\
	C_h & := \left\{ (a,c)\in\mathbb{R}^2 : 
	\begin{array}{l}
		\displaystyle -a_h\leq a\leq a_h,\\[3pt]
		\displaystyle -1+a(h+1)^2\leq c\leq \min\{1+a(h+1)^2,1+ah(h+1)\}
	\end{array} 
	\right\},\\
	D_h & := \left\{ (a,c)\in\mathbb{R}^2 : -a_h \leq a \leq a_h,\, \Lambda_h(a) \leq c\leq -1+a(h+1)^2 \right\}.
\end{align*}
The sets $A_h$, $B_h$, $C_h$, and $D_h$ have been represented in Figure~\ref{fig:proj_bigger_h0} for the case $h\geq h_0$.
At $h=h_0$ the two descriptions coincide, since
\[
\pm a_{h_0} = \pm \frac{2h_0}{h_0+1},
\]
and the curved and affine upper boundaries of $A_h$ have the same value at this common endpoint.

\begin{figure}
	\begin{center}
		\includegraphics[width=.6\textwidth]{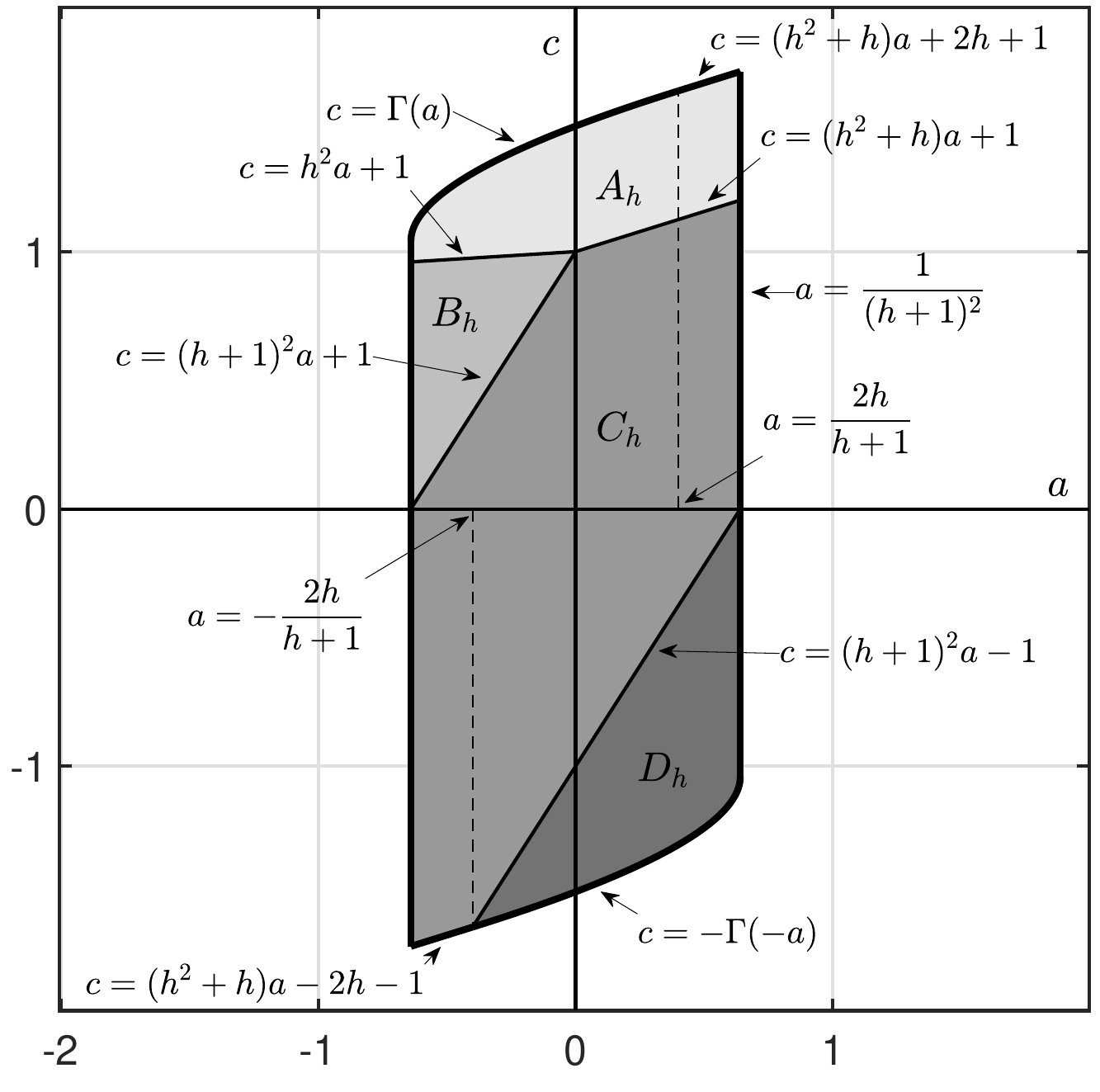}
		\caption{Projection of ${\mathsf B}_h$ onto the $ac$-plane for $h=\frac{1}{4}$.}
		\label{fig:proj_less_h0}
	\end{center}
\end{figure}

\begin{figure}
	\begin{center}
		\includegraphics[width=.6\textwidth]{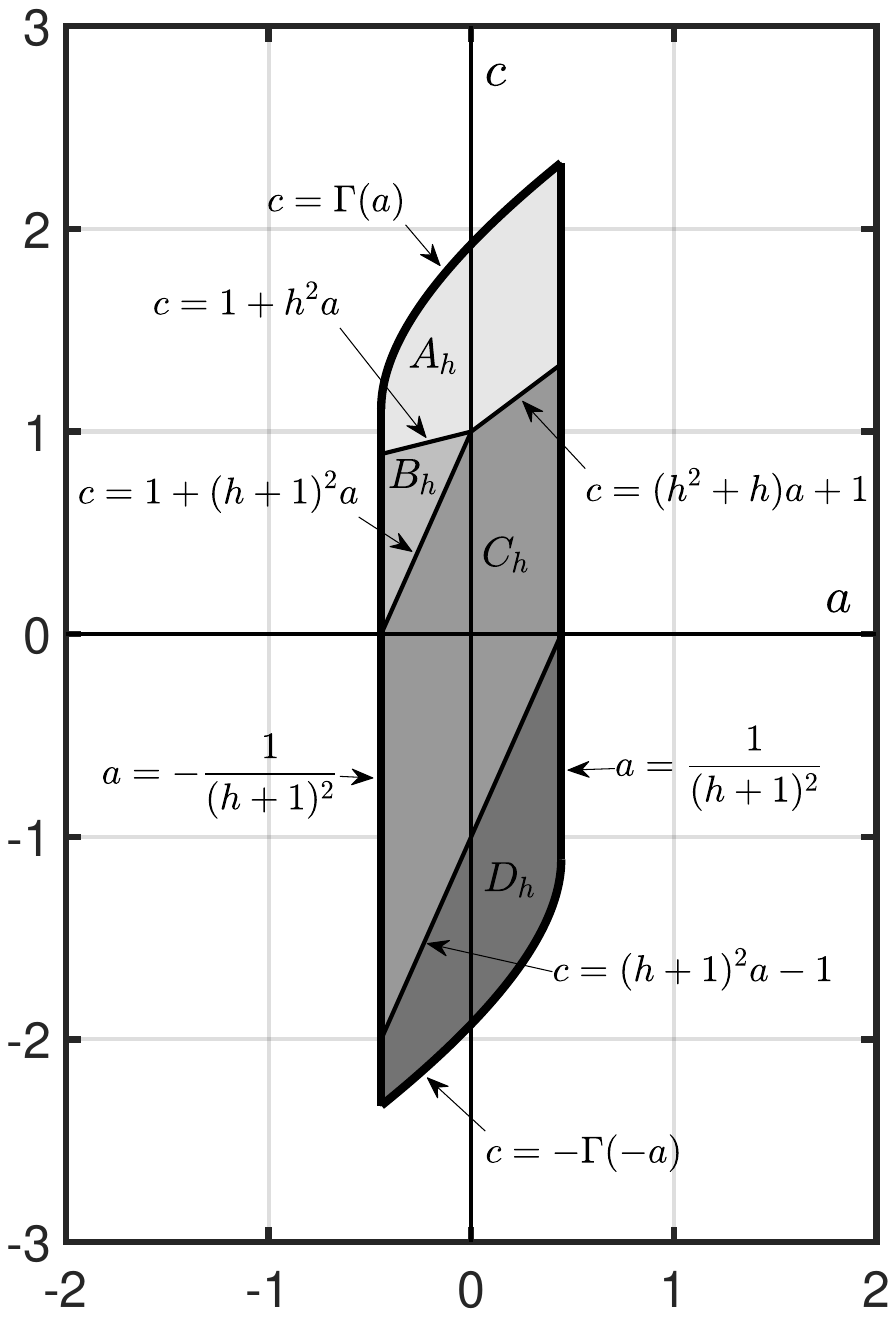}
		\caption{Projection of ${\mathsf B}_h$ onto the $ac$-plane
			for $h=1/2$.}
		\label{fig:proj_bigger_h0}
	\end{center}
\end{figure}

Having all these definitions in mind, we provide below a formula for the projection of ${\mathsf B}_h$ onto the $ac$-plane.

\begin{Theorem}\label{projac}
	If $h>0$, then $\pi_{ac}({\mathsf B}_h)=A_h\cup B_h\cup C_h\cup D_h$.
\end{Theorem}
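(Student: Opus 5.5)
The projection $\pi_{ac}({\mathsf B}_h)$ consists of those $(a,c)$ for which $\min_{b\in\mathbb R}\|(a,b,c)\|_h\le 1$. Since $b\mapsto\|(a,b,c)\|_h$ is convex and continuous, the plan is to describe, for each fixed $(a,c)$, the minimal value $m(a,c):=\min_b\|(a,b,c)\|_h$. The projection is then exactly the sublevel set $\{m\le1\}$, a compact convex set. I will therefore determine its boundary $\{m=1\}$ and check that this boundary is the union of the outer boundary arcs of $A_h,B_h,C_h,D_h$. Two preliminary reductions help. First, ${\mathsf B}_h$ is symmetric under $(a,b,c)\mapsto(-a,-b,-c)$, so the projection is symmetric under $(a,c)\mapsto(-a,-c)$; this explains $\Lambda_h(a)=-\Gamma_h(-a)$. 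Second, the projection contains ${\mathsf B}_h\cap\pi_{ac}$, which Proposition~\ref{prop:bzero} describes explicitly.

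Step 1, the inclusion $A_h\cup B_h\cup C_h\cup D_h\subseteq\pi_{ac}({\mathsf B}_h)$. For each piece I would exhibit an explicit $b=b(a,c)$ and verify $\|(a,b,c)\|_h\le1$ using Theorem~\ref{norm}.

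For $B_h$ take $b=0$. There $-1\le a(h+1)^2$ and the inequalities $1+a(h+1)^2\le c\le 1+ah^2$ give all four vertex values of Proposition~\ref{prop:bzero} in $[-1,1]$.

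For $C_h$ the affine bounds, such as $c\le 1+ah(h+1)$ and $c\ge ah(h+1)-2h-1$, suggest a $b$ chosen so that the $\mathcal C$/$\mathcal D$ formulas of Theorem~\ref{norm}, which are piecewise affine in $(a,b,c)$, are at most $1$. A natural choice is $b=-a(2h+1)$ or a similar value that kills one absolute value.

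For $A_h$ the curved bound $c\le\Gamma_h(a)$ should come from the $\mathcal A$-formula $c-b^2/(4a)$ or, after the symmetry, from the $\mathcal B$-formula. I would set that expression equal to $1$ and choose $b$ so that $(a/b,c/b)$ sits on the boundary between regions. Equivalently, $b$ is chosen so that the interior critical value of $x\mapsto ax^2+bx+c$ on the top edge equals the vertex value at $(h,1)$ or $(h+1,1)$. Solving these two equalities should produce the square-root expression in $\Gamma_h$.

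$D_h$ follows from $A_h$ by the symmetry.

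Step 2, the reverse inclusion. Suppose $\|(a,b,c)\|_h\le1$. Evaluating at the vertices $(h,0)$ and $(h+1,0)$ gives $|a|\le a_h$. Evaluating at $(h,1)$ and $(h+1,1)$ gives the affine necessary conditions
\[
|ah^2+bh+c|\le1,\qquad |a(h+1)^2+b(h+1)+c|\le1.
\]
Eliminating $b$ between them, via a combination with weights $h+1$ and $-h$, yields $|c-ah(h+1)|\le 2h+1$. This produces the affine bounds $c\le 2h+1+(h^2+h)a$ and $c\ge ah(h+1)-2h-1$. For the curved bound I would use, in addition to the vertex inequalities, the interior point of the top edge $y=1$ where the quadratic $ax^2+bx+c$ attains its extremum; this point lies in $[h,h+1]$ exactly when $(a/b,c/b)$ falls in the relevant region. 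Minimizing $c$-feasibility over $b$ gives $c\le\Gamma_h(a)$, and the minimizer is exactly the $b$ found in Step 1. Finally I would check that the region cut out by all these necessary inequalities coincides with $A_h\cup B_h\cup C_h\cup D_h$. This check is where the case split $h\lessgtr h_0$ enters: the affine cap $c\le 2h+1+(h^2+h)a$ is active on $[2h/(h+1),a_h]$ only when $2h/(h+1)<a_h$, and this inequality reduces to $h<h_0$.

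The main obstacle is the curved boundary, namely proving that $\max_b(\cdot)$ is exactly $\Gamma_h$, i.e.\ that $\min_b\|(a,b,c)\|_h=1$ along $c=\Gamma_h(a)$. This requires tracking which of $\mathcal A,\mathcal B,\mathcal C,\mathcal D$ contains $(a/b,c/b)$ as $b$ varies, and confirming that the optimal $b$ equalizes the interior extremum with a vertex value. I would first attempt this directly via a Lagrange-type balancing of two active constraints. Convexity of the norm in $b$ then guarantees that this balanced point is the global minimizer.
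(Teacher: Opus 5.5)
Your framework is sound: sufficiency by exhibiting a suitable $b$, and necessity by eliminating $b$ between one upper and one lower constraint. By one-dimensional Helly, infeasibility is always witnessed by such a pair. The gap is where you expected it, though, and the pair you propose for the curved boundary is the wrong one.

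\textbf{The curved boundary comes from the right edge, not the top edge.} You derive $c\le\Gamma_h(a)$ from the top edge, balancing the interior critical value of $x\mapsto ax^2+bx+c$ against a vertex value. At $a=0$ this cannot work. There the top-edge restriction $bx+c$ is affine, so the top edge together with all four vertices gives only your bound $|c|\le 2h+1$. At $c=2h+1$ those constraints force $b=-2$, yet $-2xy+(2h+1)y^2$ has minimum $-(h+1)^2/(2h+1)<-1$ on $\{h+1\}\times[0,1]$. The true bound is $\Gamma_h(0)=2h+1-h(\rho-\tau)^2<2h+1$, with $\tau:=\sqrt{a+a_h}$ and $\rho:=\sqrt{1+h^2a_h}$.

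The binding pair is the vertex $(h,1)$, where $P=1$, and an \emph{interior point of the right edge}, where $P=-1$. This is the $\mathcal B$-type value $(h+1)^2(a-b^2/(4c))$. It does not come from the $\mathcal A$-formula ``after the symmetry'': $P\mapsto -P$ does not exchange the two edges.

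Concretely:
\begin{itemize}
\item $P(h,1)\le 1$ gives $b\le (1-h^2a-c)/h$.
\item $P(h+1,y)\ge -1$ gives $b\ge -(h+1)\tau^2/y-cy/(h+1)$. At $y=(h+1)\tau/\sqrt c$ this reads $b\ge -2\tau\sqrt c$.
\item Eliminating $b$ gives $(\sqrt c-h\tau)^2\le\rho^2$, i.e.\ $c\le\Gamma_h(a)$.
\end{itemize}
That choice of $y$ is admissible only when $(h+1)\tau\le\sqrt c$. On the curve this means $\tau\le\rho$, i.e.\ $a\le 2h/(h+1)$. So $c\le\Gamma_h(a)$ is \emph{not} a global necessary condition. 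For $h<h_0$ and $a>2h/(h+1)$ the right-edge minimum moves to $y=1$, and the bound becomes your affine cap $c\le 2h+1+ah(h+1)$. This range bookkeeping is the real content of your final ``check that the regions coincide'', and it is what produces the split at $h_0$. For sufficiency, take $b=(1-h^2a-c)/h$ and verify $P\ge -1$ on the right edge by completing the square in $y$.

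\textbf{Step 1 also needs repair.} Your choice $b=0$ on $B_h$ is correct, and simpler than the paper's. The elimination giving $|c-ah(h+1)|\le 2h+1$ is also right. But $b=-a(2h+1)$ fails on $C_h$. For $h\ge h_0$ the point $(-a_h,-2)$ lies in $C_h$, and there $P(h,1)=h/(h+1)-2<-1$. A choice that works is $b=(1-(h+1)^2a-c)/(h+1)$, which normalizes $P(h+1,1)=1$.

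Also $D_h\neq -A_h$; for example $(a_h,0)\in D_h\setminus(-A_h)$. So ``$D_h$ by symmetry'' requires checking $D_h\subseteq -(A_h\cup B_h\cup C_h)$. That inclusion holds, but it is not automatic. Alternatively, use $b=2\sqrt{c(a-a_h)}$ on $D_h$ directly.

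The paper avoids guessing $b$ altogether. It fixes $b=U(a,c)$, the largest $b$ allowed by $P\le 1$ on the top and right edges, and labels $A_h,B_h,C_h,D_h$ by where $U$ is attained: the vertex $(h,1)$, the interior of the top edge, the vertex $(h+1,1)$, and the interior of the right edge. It then checks $P\ge -1$ for that $b$ in each case. This is your two-constraint elimination, organized so that the binding pair is identified correctly.
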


\begin{proof}
	For $b\in\mathbb R$, put 
		$$
		q_b(x):=ax^2+bx+c \qquad \text{ and } \qquad r_b(y):=a(h+1)^2+b(h+1)y+cy^2.
		$$
	As observed in Section~\ref{sec:formula}, the norm $\|(a,b,c)\|_h$ is attained on $M$.
	Hence, $(a,c)\in\pi_{ac}({\mathsf B}_h)$ if and only if there exists $b\in\mathbb R$ such that $-1\leq q_b(x)\leq1$ for every $x\in[h,h+1]$, and $-1\leq r_b(y)\leq1$ for every $y\in[0,1]$.
	Taking $y=0$ in the second inequality gives $-a_h\leq a\leq a_h$.
	
	The upper inequalities give 
		$$
		b\leq\frac{1-c}{x}-ax
		$$
	for every $x\in[h,h+1]$, and 
		$$
		b\leq\frac{1-a(h+1)^2}{(h+1)y}-\frac{c}{h+1}y
		$$
	for every $y\in(0,1]$. 
	Thus, if
	\begin{align*}
		U_1(a,c) & := \inf_{x\in [h,h+1]} \left(\frac{1-c}{x}-ax\right),\\
		U_2(a,c) & := \inf_{y \in (0,1]} \left(\frac{1-a(h+1)^2}{(h+1)y}-\frac{c}{h+1}y\right),
	\end{align*}
	then the largest admissible value of $b$ is 
		$$
		U(a,c):=\min\{U_1(a,c),U_2(a,c)\}.
		$$
	Similarly, the lower inequalities give $b\geq L(a,c):=-U(-a,-c)$.
	Consequently, $(a,c)\in\pi_{ac}({\mathsf B}_h)$ if and only if $-a_h \leq a\leq a_h$ and $L(a,c)\leq U(a,c)$.
	
	A direct one-variable minimization gives the following four
	possible values of $U(a,c)$:
		$$
		\frac{1-h^2a-c}{h},\qquad 2\sqrt{a(c-1)},\qquad \frac{1-(h+1)^2a-c}{h+1},\qquad2\sqrt{c\left(a-\frac{1}{(h+1)^2}\right)}.
		$$
	The first and third values correspond to the endpoints $x=h$ and $x=h+1$, respectively.
	The second value occurs when the minimum of $U_1$ is attained at an interior critical point of $[h,h+1]$.
	The fourth value is obtained at an interior critical point of $U_2$ whenever that point belongs to $(0,1)$; the limiting boundary cases follow by continuity.
	In particular, when $a=a_h$ and $c\leq 0$, the same formula gives the infimum of $U_2$. If $c<0$, this infimum is approached only as $y\downarrow 0$.
	
	\medskip
	
	We now identify the four resulting regions.
	
	\medskip
	
	First, assume that 
		$$
		U(a,c)=b_h:=\frac{1-h^2a-c}{h}
		$$ 
	The condition that the minimum of $U_1$ is attained at $x=h$ is equivalent to 
		$$
		c\geq\max\{1+h^2a,1+h(h+1)a\}.
		$$
	Moreover, the latter inequality implies $c>0$, so $r_{b_h}$ is convex.
	Since 
		$$
		r_{b_h}(0)=a(h+1)^2\leq 1 \qquad \text{and} \qquad r_{b_h}(1)=q_{b_h}(h+1)\leq 1,
		$$
	it follows that 
		$$
		r_{b_h}(y)\leq (1-y)r_{b_h}(0)+yr_{b_h}(1)\leq 1
		$$
	for every $y\in[0,1]$. 
	Thus, $b_h\leq U_2(a,c)$.
	Put 
		$$
		\tau:=\sqrt{a+a_h},\qquad \rho:=\sqrt{1+h^2 a_h} \qquad \text{and} \qquad  z:=\sqrt c.
		$$ 
	Notice that 
		$$
		q_{b_h}(h)=1 \qquad \text{and} \qquad q_{b_h}(h+1)+1 = \frac{2h+1+ah(h+1)-c}{h}.
		$$
	Moreover, 
		\begin{equation}\label{eq:rbh1}
			r_{b_h}(y)+1 = (zy-(h+1)\tau)^2+\frac{h+1}{h}\left(\rho^2-(z-h\tau)^2\right)y.
		\end{equation}
	We also have 
		$$
		2h+1+ah(h+1)-\Gamma_h(a) = h(\rho-\tau)^2,
		$$
	and 
		$$
		\tau\leq\rho \qquad \text{if and only if} \qquad a\leq\frac{2h}{h+1}.
		$$
	
	Assume first that $a\leq 2 h/(h+1)$.
	Then, $\tau\leq\rho$. 
	By \eqref{eq:rbh1} we have
		$$
		r_{b_h}(y)\geq-1 \text{ for every } y\in[0,1] \qquad \text{if and only if} \qquad c\leq(\rho+h\tau)^2=\Gamma_h(a).
		$$
	Indeed, assume that $c>\Gamma_h(a)$.
	If $\tau>0$, then 
		$$
		z>\rho+h\tau\geq(h+1)\tau,
		$$
	and evaluating $r_{b_h}$ at $y=(h+1)\tau/z \in(0,1]$ gives $r_{b_h}(y)+1<0$. 
	If $\tau=0$, then $a=-a_h$, $r_{b_h}(0)=-1$, and $r_{b_h}'(0)=(\rho^2-c)(h+1)/h<0$, so $r_{b_h}(y)<-1$ for every sufficiently small $y>0$.
	Conversely, assume that $c\leq\Gamma_h(a)$.
	Then, $z\leq\rho+h\tau$, and hence $z-h\tau\leq\rho$.
	We also have $z-h\tau\geq-\rho$.
	Indeed, if $h\tau\leq\rho$, this is immediate. 
	If $h\tau>\rho$, then 
		$$
		1+h^2a-(h\tau-\rho)^2 = 2h\left(\tau\rho-\frac{h}{(h+1)^2}\right)>0,
		$$
	and therefore the lower bound $c\geq1+h^2a$ gives $z\geq h\tau-\rho$.
	Consequently, $|z-h\tau|\leq\rho$, so that 
		$$
		\rho^2-(z-h\tau)^2\geq0.
		$$
	It now follows from \eqref{eq:rbh1} that $r_{b_h}(y)\geq-1$ for every $y\in[0,1]$.
	
	Since $\Gamma_h(a)\leq2h+1+ah(h+1)$, the inequality $c\leq\Gamma_h(a)$ also gives $q_{b_h}(h+1)\geq-1$. 
	For $t\in[0,1]$, the quadratic interpolation identity gives 
		$$
		q_{b_h}(h+t) = (1-t)q_{b_h}(h)+tq_{b_h}(h+1)-at(1-t).
		$$
	Using $q_{b_h}(h)=1$, $q_{b_h}(h+1)\geq-1$ and $a\leq a_h\leq1$, we obtain
		$$
		q_{b_h}(h+t)+1 \geq (1-t)+t(-1)-at(1-t)+1 = (1-t)(2-at)\geq0.
		$$
	Thus, $q_{b_h}(x)\geq-1$ for every $x\in[h,h+1]$.
	
	Now assume that $a\geq 2h/(h+1)$.
	This case can occur only when $h\leq h_0$. 
	Since $h_0<1$, we have $h<1$. 
	For $c=\Gamma_h(a)$, we have $z=\rho+h\tau$, and a direct calculation gives 
		$$
		r_{b_h}'(1)=2z(\rho-\tau)\leq 0.
		$$
	Moreover, 
		$$
		r_{b_h}'(1)=\frac{h+1}{h}(1-h^2a)+\frac{h-1}{h}c,
		$$
	which is decreasing as a function of $c$, since $h<1$. 
	Therefore, $r_{b_h}'(1)\leq0$ whenever 
		$$
		\Gamma_h(a)\leq c\leq2h+1+ah(h+1).
		$$
	Notice also that 
		$$
		c\geq1+h(h+1)a>0.
		$$
	Hence, 
		$$
		r_{b_h}''(y)=2c>0,
		$$
	so $r_{b_h}'$ is increasing on $[0,1]$. 
	Consequently, for every $y\in[0,1]$, 
		$$
		r_{b_h}'(y)\leq r_{b_h}'(1)\leq0,
		$$
	and therefore $r_{b_h}$ is decreasing on $[0,1]$. 
	It follows that $r_{b_h}(y)\geq-1$ for every $y\in[0,1]$ if and only if 
		$$
		r_{b_h}(1)=q_{b_h}(h+1)\geq-1,
		$$
	which is equivalent to 
		$$
		c\leq2h+1+ah(h+1).
		$$
	For $c\leq\Gamma_h(a)$, the sufficiency part of the preceding square-completion argument remains valid without assuming $\tau\leq\rho$.
	Indeed, $z\leq\rho+h\tau$, so $z-h\tau\leq\rho$.
	If $h\tau\leq\rho$, then $z-h\tau\geq-\rho$ is immediate.
	If $h\tau>\rho$, then
	\[
	1+h^2a-(h\tau-\rho)^2
	=2h\left(\tau\rho-\frac{h}{(h+1)^2}\right)>0,
	\]
	and the lower bound $c\geq1+h^2a$ gives $z\geq h\tau-\rho$.
	Thus, $|z-h\tau|\leq\rho$, and the identity for $r(y)+1$ yields $r(y)\geq-1$ on $[0,1]$.
	Consequently, in this range of $a$, the complete condition is 
		$$
		c\leq2h+1+ah(h+1).
		$$
	
	It follows that, if $h\geq h_0$, then $2h/(h+1)\geq a_h$, so the first case gives exactly $A_h$.
	If $0<h\leq h_0$, the same computation gives exactly the two components in the definition of $A_h$.
	
	\medskip
	
	Second, assume that 
		$$
		U(a,c)=b_B:=2\sqrt{a(c-1)},
		$$
	which is the interior minimum of $U_1$. 
	This occurs precisely when 
		$$
		a<0 \qquad \text{and} \qquad 1+(h+1)^2a<c<1+h^2a.
		$$
	Writing
		$$
		\xi:=\sqrt{\frac{1-c}{-a}}\in[h,h+1], 
		$$
	we have 
		$$
		q_{b_B}(x)=1-(-a)(x-\xi)^2.
		$$
	Since $|x-\xi|\leq1$ and $-a\leq a_h \leq1$, it follows that $0\leq q_{b_B}(x)\leq1$ for every $x\in[h,h+1]$.
	Moreover, put 
		$$
		d:=(h+1)\sqrt{-a} \qquad \text{and} \qquad e:=\sqrt{1-c}.
		$$
	Then, $0\leq e\leq d\leq1$, and 
		$$
		r_{b_B}(y)=y^2-(ey-d)^2.
		$$
	For $y\in[0,1]$, we have $|ey-d|\leq d\leq1$, and hence $-1\leq r_{b_B}(y)\leq y^2\leq1$.
	The boundary cases follow by continuity, and the adjacent formulas for $U(a,c)$ agree there. 
	Thus, this case gives exactly $B_h$.
	
	\medskip
	
	Third, assume that 
		$$
		U(a,c)=b_{h+1}:=\frac{1-(h+1)^2a-c}{h+1}.
		$$
	The condition that the minimum of $U_1$ is attained at $x=h+1$, and that $U_2$ is also minimized at $y=1$, gives 
		$$
		c\leq1+h(h+1)a, \qquad c\leq1+(h+1)^2a \qquad \text{and} \qquad c\geq-1+(h+1)^2a.
		$$
	We have $q_{b_{h+1}}(h+1)=r_{b_{h+1}}(1)=1$.
	The remaining lower endpoint condition on the horizontal side is $q_b(h)\geq-1$, which is equivalent to 
		$$
		c\geq ah(h+1)-2h-1.
		$$
	These endpoint inequalities imply the lower inequalities on the whole of both sides. 
	Indeed, for $t\in[0,1]$, 
		$$
		q_{b_{h+1}}(h+t) = (1-t)q_{b_{h+1}}(h)+tq_{b_{h+1}}(h+1)-at(1-t),
		$$
	and the assumptions $q_{b_{h+1}}(h)\geq-1$, $q_{b_{h+1}}(h+1)=1$ and $a\leq a_h\leq1$ give $q_{b_{h+1}}(h+t)\geq-1$.
	Similarly, 
		$$
		r_{b_{h+1}}(y) = (1-y)a(h+1)^2+y-cy(1-y).
		$$
	Since $a(h+1)^2\geq-1$ and $c\leq1+a(h+1)^2$, we have
	\begin{align*}
		r_{b_{h+1}}(y)+1 & \geq (1-y)a(h+1)^2+y-\left(1+a(h+1)^2\right)y(1-y)+1\\
		& = a(h+1)^2(1-y)^2+y^2+1\\
		& \geq -(1-y)^2+y^2+1 =2y \geq0.
	\end{align*}
	Consequently, this case gives the conditions that appear in the definition of $C_h$ for $h\leq h_0$.
	If $h\geq h_0$, the additional lower bound $c\geq ah(h+1)-2h-1$ is redundant. 
	Indeed, note that 
		$$
		ah(h+1)-2h-1 -(-1+a(h+1)^2) = -a(h+1)-2h,
		$$
	and this quantity is nonpositive for $a\geq-a_h$ precisely when $2h(h+1)\geq1$, that is, when $h\geq h_0$.
	Thus, this third case gives exactly $C_h$ in both parameter regimes.
	
	\medskip
	
	Finally, assume that 
		$$
		U(a,c)=b_D:=2\sqrt{c\left(a-\frac{1}{(h+1)^2}\right)},
		$$
	which is the interior minimum of $U_2$. 
	Put 
		$$
		\psi:=\sqrt{a_h-a}, \qquad  z:=\sqrt{-c} \qquad \text{and} \qquad \rho:=\sqrt{1+h^2 a_h}.
		$$
	Then, $b_D=2z\psi$, and the condition $c\leq-1+(h+1)^2a$ is equivalent to $z\geq(h+1)\psi$.
	For this value of $b_D$, we have 
		$$
		q_{b_D}(x) = \frac{x^2}{(h+1)^2}-(z-x\psi)^2.
		$$
	Since $z\geq(h+1)\psi$, we have $z-x\psi\geq0$ for every $x\in[h,h+1]$. 
	Therefore, 
		$$
		\frac{d}{dx}(q_{b_D}(x)+1) = \frac{2x}{(h+1)^2}+2\psi(z-x\psi)>0.
		$$
	Thus, $q_{b_D}(x)+1$ is increasing on $[h,h+1]$.
	Therefore the lower inequality on the horizontal side is equivalent to $q_{b_D}(h)\geq-1$. 
	This condition is equivalent to $z\leq\rho+h\psi$ or, equivalently, 
		$$
		c\geq-(\rho+h\psi)^2=\Lambda_h(a).
		$$
	The upper inequality on the horizontal side follows directly from 
		$$
		q_{b_D}(x)\leq\frac{x^2}{(h+1)^2}\leq1.
		$$
	
	On the vertical side, 
		$$
		r_{b_D}(y) = 1-(zy-(h+1)\psi)^2.
		$$
	Since $y\longmapsto zy-(h+1)\psi$ is affine on $[0,1]$, it is enough to check its values at the endpoints. 
	At $y=0$, $|(h+1)\psi|\leq\sqrt2$, since $a\geq-a_h$. 
	Moreover, the chain of inequalities 
		$$
		(h+1)\psi\leq z\leq\rho+h\psi
		$$
	implies $\psi\leq\rho$, and hence
		$$
		|z-(h+1)\psi| = z-(h+1)\psi \leq\rho-\psi \leq\rho<\sqrt2.
		$$
	It follows that $|r_{b_D}(y)|\leq1$ for every $y\in[0,1]$.
	
	Thus, the conditions in this fourth case are 
		$$
		\Lambda_h(a)\leq c\leq-1+(h+1)^2a.
		$$
	This interval is nonempty if and only if $\psi\leq\rho$, which is equivalent to $a\geq-2h/(h+1)$.
	If $h\geq h_0$, then $-2h/(h+1) \leq -a_h$, so this restriction is redundant with $a\geq -a_h$. 
	If $0<h\leq h_0$, it gives the lower restriction $a\geq-2h/(h+1)$.
	Therefore, this fourth case gives exactly $D_h$ in both parameter regimes.
	
	\medskip
	
	On every common boundary of two of the regions $A_h,B_h,C_h,D_h$, the corresponding formulas for $U(a,c)$ agree since they are adjacent endpoint or critical-point formulas for the same one-variable minimum.
	Thus, the overlaps cause no ambiguity.

	We have shown that $L(a,c)\leq U(a,c)$ for $-a_h \leq a\leq a_h$ implies $(a,c)\in A_h\cup B_h\cup C_h\cup D_h$.
	Hence, $\pi_{ac}({\mathsf B}_h) \subseteq A_h\cup B_h\cup C_h\cup D_h$.
	
	Conversely, if $(a,c)\in A_h\cup B_h\cup C_h\cup D_h$, the preceding computations give $L(a,c)\leq U(a,c)$. 
	Fix $b\in[L(a,c),U(a,c)]$.
	Then, all the inequalities defining the two side restrictions are satisfied, and therefore $\|(a,b,c)\|_h\leq1$.
	Thus, $(a,c)\in\pi_{ac}({\mathsf B}_h)$.
	This completes the proof.
\end{proof}


\section{Parametrization of ${\mathsf S}_h$}\label{sec:Parametriztion}

\begin{Theorem}\label{parametrization}
	For every $h>0$, we define the functions $F_h,G_h:\pi_{ac}({\mathsf B}_h)\longrightarrow{\mathbb R}$ by
		$$
		F_h(a,c) :=
		\begin{cases}
		\dfrac{1-h^2a-c}{h}, & \text{if } (a,c)\in A_h,\\		
		2\sqrt{a(c-1)}, & \text{if } (a,c)\in B_h,\\
		\dfrac{1-(h+1)^2a-c}{h+1}, & \text{if } (a,c)\in C_h,\\
		2\sqrt{c\left[a-\frac{1}{(h+1)^2}\right]}, & \text{if } (a,c)\in D_h,
		\end{cases}
		$$
	and $G_h(a,c) := -F_h(-a,-c)$.
	Then, ${\mathsf S}_h=\graph(F_h)\cup \graph(G_h) \,\cup \, T_h^+ \, \cup \, T_h^-$, where 
		\begin{align*}
			T_h^+ := \left \{\left (\frac{1}{(h+1)^2},b,c \right ) : c\in I_h,\, G_h\left(\frac{1}{(h+1)^2},c\right)\leq b\leq F_h\left(\frac{1}{(h+1)^2},c\right) \right \},
		\end{align*}
	$T_h^- := -T_h^+$, and
		$$
		I_h:=\begin{cases}
		\left[-\dfrac{2h^2+2h+1}{(h+1)^2},\dfrac{2h^2+4h+1}{h+1}\right], & \text{if } h< h_0,\\[13pt]
		\left[-\Gamma_h\left(-\dfrac{1}{(h+1)^2}\right),\Gamma_h\left(\dfrac{1}{(h+1)^2}\right)\right], & \text{if } h\ge h_0.
		\end{cases}
		$$
\end{Theorem}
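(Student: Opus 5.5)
The plan is to reuse the machinery from the proof of Theorem~\ref{projac}. For fixed $(a,c)\in\pi_{ac}({\mathsf B}_h)$, the fiber ${\mathsf B}_h\cap\{(a,\cdot,c)\}$ is the segment $[L(a,c),U(a,c)]$. This holds because $\|(a,b,c)\|_h\le1$ is equivalent to $-1\le q_b\le1$ on $[h,h+1]$ and $-1\le r_b\le1$ on $[0,1]$, and these conditions are linear in $b$. The upper ones are equivalent to $b\le U(a,c)$ and the lower ones to $b\ge L(a,c)=-U(-a,-c)$.

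\emph{Step 1.} Identify $U=F_h$ and $L=G_h$. The region analysis in the proof of Theorem~\ref{projac} already shows that $U(a,c)$ equals the endpoint value at $x=h$ on $A_h$, the interior critical value of $U_1$ on $B_h$, the common endpoint value at $x=h+1$, $y=1$ on $C_h$, and the interior critical value of $U_2$ on $D_h$. On common boundaries the formulas agree, so $F_h$ is well defined. By symmetry, $L=G_h$. I would also check that $\pi_{ac}({\mathsf B}_h)$ is symmetric under $(a,c)\mapsto(-a,-c)$, which follows from ${\mathsf B}_h=-{\mathsf B}_h$; this makes $G_h$ defined on the same set.

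\emph{Step 2.} Show ${\mathsf S}_h\subseteq\graph(F_h)\cup\graph(G_h)\cup T_h^+\cup T_h^-$. Let $(a,b,c)\in{\mathsf S}_h$. If $b\in\{F_h(a,c),G_h(a,c)\}$ we are done. Otherwise $G_h<b<F_h$, so no inequality involving $b$ is tight. Since $\|(a,b,c)\|_h=1$, some value $\pm1$ is attained on $M$ at a point where the constraint does not depend on $b$. The only such point is $(h+1,0)$, the endpoint $y=0$ of the vertical side, where $r_b(0)=a(h+1)^2$. Hence $a=\pm a_h$, and the point lies in $T_h^\pm$, once we know that $c$ ranges in $I_h$.

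\emph{Step 3.} Show the reverse inclusion. Points on the graphs have $b=U$ or $b=L$, so at least one constraint is tight and the norm equals $1$. Points of $T_h^\pm$ satisfy $|r_b(0)|=1$ and lie in the ball by the fiber description, so they also lie on ${\mathsf S}_h$.

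\emph{Step 4.} Compute $I_h$, the section of $\pi_{ac}({\mathsf B}_h)$ at $a=a_h$. I expect this to be the main obstacle, because of the case split at $h_0$.
\begin{itemize}
\item \textbf{Case $h\ge h_0$.} The top of the section comes from $A_h$ and equals $\Gamma_h(a_h)$. The bottom comes from $D_h$ and equals $\Lambda_h(a_h)=-\Gamma_h(-a_h)$.
\item \textbf{Case $h<h_0$.} The top comes from the affine piece of $A_h$: $2h+1+h(h+1)a_h=(2h^2+4h+1)/(h+1)$. For the bottom, $D_h$ at $a=a_h$ gives $\Lambda_h(a_h)=-(\rho+h\cdot0)^2=-(1+h^2a_h)=-(2h^2+2h+1)/(h+1)^2$, matching the stated endpoint. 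Here I would verify that $C_h$'s lower bound $a_hh(h+1)-2h-1$ does not go lower than this, and that the intervening pieces $C_h$ and $D_h$ fill the section without gaps.
\end{itemize}
I would do this by evaluating the piecewise boundaries at $a=a_h$ and using continuity of the region boundaries.
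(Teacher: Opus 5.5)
Your proposal is correct. Steps 1 and 4 match the paper: you identify $F_h=U$ and $G_h=L$ from the case analysis in the proof of Theorem~\ref{projac}, and you read off the section of $A_h\cup C_h\cup D_h$ at $a=a_h$. Your Step 2, however, proves the key inclusion differently.

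The paper argues topologically. Since $U$ and $L$ are continuous, any $(a,c)$ with $|a|<a_h$ and $L(a,c)<U(a,c)$ is an interior point of $\pi_{ac}(\mathsf B_h)$. Hence the fibre over every boundary point of the projection with $|a|<a_h$ is a singleton, and vertical pieces of $\mathsf S_h$ can only lie over the supporting lines $a=\pm a_h$.

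You argue pointwise from norm attainment on the compact set $M$. If $L<b<U$, no constraint at a point of $M$ with $xy>0$ can be active, since that would force $b\ge U$ or $b\le L$. So $|P|=1$ must be attained at $(h+1,0)$, the only point of $M$ with $xy=0$ (this is where $h>0$ is used), which gives $|a|(h+1)^2=1$.

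What your route buys:
\begin{itemize}
\item It is more elementary and self-contained.
\item It avoids the continuity of $U$ and $L$. This is delicate for $U_2$, an infimum over the non-compact set $(0,1]$ that degenerates at $a=\pm a_h$.
\item It avoids the implicit fact that points strictly between the two graphs over interior base points are interior points of $\mathsf B_h$.
\item It shows directly why all of $T_h^\pm$ lies on $\mathsf S_h$, since $|r_b(0)|=1$ there.
\end{itemize}
The paper's argument, in turn, is the generic one for a convex body presented as the region between two graphs over a convex base, and uses nothing about the shape of $M$.

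Two minor corrections.

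In Step 3, ``a constraint is tight'' must be read via compactness of $M$, because the infimum $U_2$ need not be attained (for instance when $a=a_h$ and $c<0$); in that case $r_b(0)=1$ anyway. Equivalently, $(a,U+\varepsilon,c)\notin\mathsf B_h$ for $\varepsilon>0$, so $(a,U,c)\in\partial\mathsf B_h=\mathsf S_h$.

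In Step 4, the check you propose for $C_h$ would fail as stated: at $a=a_h$ one has $a_hh(h+1)-2h-1=-\frac{2h^2+2h+1}{h+1}<\Lambda_h(a_h)$. This is harmless, because the lower bound in $C_h$ is a maximum with $-1+a_h(h+1)^2=0$. The sections at $a=a_h$ are $[\Lambda_h(a_h),0]$ from $D_h$, $[0,\frac{2h+1}{h+1}]$ from $C_h$, and an interval starting at $\frac{2h+1}{h+1}$ from $A_h$. The absence of gaps is automatic in any case, because $\pi_{ac}(\mathsf B_h)$ is convex.
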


\begin{proof}
By Theorem~\ref{projac}, for each $(a,c)\in\pi_{ac}({\mathsf B}_h)$ the admissible values of $b$ form the interval
	$$
	[L(a,c),U(a,c)].
	$$
The four formulas displayed in the definition of $F_h$ are exactly the four possible values of $U(a,c)$ obtained in the proof of Theorem~\ref{projac}. 
Hence,  
	$$
	F_h(a,c)=U(a,c)
	$$
for all $(a,c)\in\pi_{ac}({\mathsf B}_h)$.
The formulas agree on the common boundaries of $A_h,B_h,C_h,D_h$, as observed at the end of the proof of Theorem~\ref{projac}; thus, $F_h$ is well defined. 
By symmetry, 
	$$
	G_h(a,c)=-U(-a,-c)=L(a,c).
	$$
Consequently,
\[
{\mathsf B}_h=\bigl\{(a,b,c)\in\mathbb R^3:(a,c)\in\pi_{ac}({\mathsf B}_h),\ G_h(a,c)\leq b\leq F_h(a,c)\bigr\}.
\]

The functions $U$ and $L$ are continuous on the projection. Assume that
\[
|a|<a_h
\qquad\text{and}\qquad
L(a,c)<U(a,c).
\]
Both inequalities remain strict in a neighborhood of $(a,c)$, and therefore $(a,c)$ is an interior point of $\pi_{ac}({\mathsf B}_h)$. It follows that, at every boundary point of the projection with $|a|<a_h$, one has $L(a,c)=U(a,c)$. Hence, over those boundary arcs, the fibre is a singleton and is already contained in $\graph(F_h)\cap\graph(G_h)$.

It remains to examine the two supporting lines
\[
a=\pm a_h.
\]
Reading the corresponding vertical sections directly from the definitions of $A_h,C_h,D_h$ gives, at $a=a_h$,
\[
c\in I_h.
\]
For $0<h<h_0$, this interval is
\[
\left[-\frac{2h^2+2h+1}{(h+1)^2},\frac{2h^2+4h+1}{h+1}\right],
\]
whereas, for $h\geq h_0$, it is
\[
\left[-\Gamma_h\left(-a_h\right),
\Gamma_h\left(a_h\right)\right].
\]
The vertical boundary piece over this section is precisely $T_h^+$, and the centrally symmetric piece over $a=-a_h$ is $T_h^-=-T_h^+$.

The upper and lower endpoints of every remaining fibre belong to $\graph(F_h)$ and $\graph(G_h)$, respectively. Therefore,
\[
{\mathsf S}_h=\graph(F_h)\cup\graph(G_h)\cup T_h^+\cup T_h^-.
\]
\end{proof}


\section{Extreme points of ${\mathsf B}_h$}\label{sec:ExtremePoints}

\begin{Theorem}
	${}$
	\begin{enumerate}
		\item[\rm{(a)}] If $h\geq h_0$, then $\ext({\mathsf B}_h)$ is given by the following polynomials
		$$
		\pm \left(t,\frac{1-h^2t-\Gamma_h(t)}{h},\Gamma_h(t)\right),\quad\text{$t\in \left [-\frac{1}{(h+1)^2}, \frac{1}{(h+1)^2}\right ]$},
		$$
		$$
		\pm\left(-\frac{1}{(h+1)^2},\frac{2\sqrt{1-s}}{h+1},s\right),\quad\text{$s\in \left[0,\frac{2h+1}{(h+1)^2}\right]$},
		$$
		$$
		\pm\left(\frac{1}{(h+1)^2},-\frac{2\sqrt{2s}}{h+1},s\right),\quad\text{$s\in \left[2,\left(\frac{\sqrt{2h^2+2h+1}+\sqrt{2}h}{h+1}\right)^2\right]$},
		$$
		$$
		\pm (0,0,1),\quad \pm\left(\frac{1}{(h+1)^2},0,0\right),\quad \pm\left(\frac{1}{(h+1)^2},-\frac{2h+1}{(h+1)^2},\frac{2h+1}{h+1}\right).
		$$
		
		\vspace{3pt}
		
		\item[\rm{(b)}] If $0<h< h_0$, then $\ext({\mathsf B}_h)$ is given by the following polynomials
		$$
		\pm \left(t,\frac{1-h^2t-\Gamma_h(t)}{h},\Gamma_h(t)\right),\quad\text{$t\in \left[-\frac{1}{(h+1)^2},\frac{2h}{h+1}\right]$},
		$$
		$$
		\pm\left(-\frac{1}{(h+1)^2},\frac{2\sqrt{1-s}}{h+1},s\right),\quad\text{$s\in \left[0,\frac{2h+1}{(h+1)^2}\right]$},
		$$
		$$
		\pm (0,0,1),\quad \pm\left(\frac{1}{(h+1)^2},-\frac{2h^2+6h+3}{(h+1)^2},\frac{2h^2+4h+1}{h+1}\right),
		$$
		
		$$
		 \pm\left(\frac{1}{(h+1)^2},0,0\right),\quad \pm\left(\frac{1}{(h+1)^2},-\frac{2h+1}{(h+1)^2},\frac{2h+1}{h+1}\right).
		$$
	\end{enumerate}
\end{Theorem}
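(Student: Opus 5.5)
We determine the extreme points from the parametrization of Theorem~\ref{parametrization}. By central symmetry $\ext({\mathsf B}_h)=-\ext({\mathsf B}_h)$, so it suffices to study the upper graph $\graph(F_h)$ and the vertical face $T_h^+$. A point of ${\mathsf S}_h$ fails to be extreme exactly when it is the midpoint of a nondegenerate segment contained in ${\mathsf S}_h$. We therefore go through the pieces of $\graph(F_h)$ over $A_h,B_h,C_h,D_h$ and look at where $F_h$ is affine.

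\textbf{Affine pieces.} Over $A_h$ and $C_h$, $F_h$ is affine. The graphs over these regions are planar faces, namely the faces $q_b(h)=1$ and $q_b(h+1)=1$ (equivalently $r_b(1)=1$). Their extreme points lie only on the relative boundaries. For $A_h$, the curved edge $c=\Gamma_h(a)$ gives the family $\bigl(t,(1-h^2t-\Gamma_h(t))/h,\Gamma_h(t)\bigr)$. For $h<h_0$ the portion with $t\in[2h/(h+1),a_h]$ is replaced by a straight edge $c=2h+1+ah(h+1)$, whose endpoints $t=2h/(h+1)$ and $t=a_h$ are vertices. The second of these vertices is $\bigl(a_h,-(2h^2+6h+3)/(h+1)^2,(2h^2+4h+1)/(h+1)\bigr)$. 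The straight edges $c=1+h^2a$ and $c=1+(h^2+h)a$ contribute only their corners, such as $(0,0,1)$ and the corner where the two lines meet near $a=a_h$. For $C_h$, the vertices are the corners of a polygon; I expect these to yield $(a_h,0,0)$, $\bigl(a_h,-(2h+1)/(h+1)^2,(2h+1)/(h+1)\bigr)$ and $(0,0,1)$ again. In each case the corner coordinates come from intersecting the defining lines and substituting into $F_h$.

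\textbf{Ruled pieces.} Over $B_h$ the graph of $b=2\sqrt{a(c-1)}$ satisfies $b^2=4a(c-1)$, a quadric cone with vertex $(0,0,1)$. It is ruled by segments through that vertex, so its interior points are not extreme, and only the vertex and the far boundary curve at $a=-a_h$ survive. This gives $\bigl(-a_h,2\sqrt{1-s}/(h+1),s\bigr)$ with $s\in[0,(2h+1)/(h+1)^2]$, which is the range of $c$ over $B_h$ at $a=-a_h$. The same reasoning applies over $D_h$. There $b^2=4c(a-a_h)$ is a cone with vertex $(a_h,0,0)$, so only the boundary curve $c=\Lambda_h(a)$ and the vertex matter.

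\textbf{The lower boundary curve over $D_h$.} The curve $c=\Lambda_h(a)$ is shared with $\graph(G_h)$, since $L=U$ there. It is the image under $-1$ of the $\Gamma_h$-curve of the $A_h$ piece of $G_h$, so by symmetry it is already counted in the $\pm$ family. The remaining endpoint at $a=a_h$ on the $D_h$ ruling gives the family $\bigl(a_h,-2\sqrt{2s}/(h+1),s\bigr)$ for $h\ge h_0$, coming from $T_h^+$. This has to be reconciled by showing that, for $h\ge h_0$, the upper edge of $T_h^+$ lies along $F_h$ over $D_h\cap\{a=a_h\}$ and is strictly convex there. For $h<h_0$ this family disappears because $D_h$ starts at $a=-2h/(h+1)>-a_h$.

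\textbf{Strict convexity and the vertical faces.} For each surviving curve, extremality is proved by showing it is exposed, or at least not interior to any segment of ${\mathsf S}_h$. Along the $\Gamma_h$-curve we use strict convexity of $\Gamma_h$, which follows from $\Gamma_h''>0$ after a short computation with $\tau=\sqrt{a+a_h}$. The face $T_h^+$ is a planar convex set in $a=a_h$, and its extreme points are its boundary points where the bounding curves $F_h(a_h,\cdot)$ and $G_h(a_h,\cdot)$ are strictly concave or convex, plus its corners. Computing $F_h(a_h,c)$ piecewise over $c\in I_h$ gives affine segments over $A_h$ and $C_h$, the curve $-2\sqrt{2c}/(h+1)$-type pieces over $D_h$, and constant pieces. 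This yields the listed points with $a=a_h$.

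\textbf{Main obstacle.} The hard part is the bookkeeping at $a=\pm a_h$: identifying exactly which points of $T_h^\pm$ are extreme in each regime, and checking that the $s$-ranges are correct. The upper endpoint $\bigl((\sqrt{2h^2+2h+1}+\sqrt2 h)/(h+1)\bigr)^2$ arises as $-\Lambda_h(a_h)$ after rewriting. I would settle this by explicitly drawing the section $a=a_h$ of ${\mathsf B}_h$ from the formulas for $F_h$ and $G_h$, and verifying, via the $\pm$ symmetry, that every vertex and curved arc of this section matches the list.
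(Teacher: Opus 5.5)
Your overall plan is the paper's. You use exposed planar faces over $A_h$ and $C_h$, and cones over $B_h$ and $D_h$ with vertices $(0,0,1)$ and $(a_h,0,0)$; these cones are ruled exactly by the paper's $L_\lambda$ and $R_\lambda$. You then apply curvature arguments inside faces. Two of your substitutions are legitimate alternatives: taking the $\Gamma_h$-arc as the curved edge of the $A_h$-face, and using ``extreme points of an exposed face are extreme in ${\mathsf B}_h$'' for the corners. The paper instead uses projection plus singleton fibres, and three independent supporting hyperplanes. Write $\pm\Upsilon_h,\pm\Phi_h,\pm\Psi_h$ for the three families in the order of the statement.

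However, several concrete steps are false.
\begin{itemize}
\item \textbf{Sign of $\Gamma_h''$.} $\Gamma_h$ is strictly concave, not convex. Indeed $\Gamma_h(a)=\rho^2+2h\rho\sqrt{a+a_h}+h^2(a+a_h)$ with $\rho=\sqrt{1+h^2a_h}$, so $\Gamma_h''(a)=-h\rho/\bigl(2(a+a_h)^{3/2}\bigr)<0$. Concavity is what you need, because $c=\Gamma_h(a)$ bounds $A_h$ from above.
\item \textbf{The cone over $D_h$ for $h\ge h_0$.} Your analysis omits a boundary piece. $D_h$ meets $a=-a_h$ in the nondegenerate interval $-\Gamma_h(a_h)\le c\le -2$. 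The rulings $(a_h,0,0)+\mu(1,-2\sqrt\lambda,\lambda)$, with $\lambda\ge(h+1)^2$ and $\sqrt\lambda-h\le\rho(h+1)/\sqrt2$, exit $D_h$ there. Their endpoints $(-a_h,4\sqrt\lambda\,a_h,-2\lambda a_h)$ trace exactly $-\Psi_h$. Your remark that the family disappears for $h<h_0$ because $D_h$ starts at $a=-2h/(h+1)>-a_h$ is the right reason, but it concerns the face $a=-a_h$.
\item \textbf{The proposed source of $\Psi_h$.} The mechanism you propose instead fails. On $D_h\cap\{a=a_h\}=\{\Lambda_h(a_h)\le c\le 0\}$ one has $F_h(a_h,c)=2\sqrt{c(a_h-a_h)}=0$, so the upper edge of $T_h^+$ is flat there. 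The curve $b=-2\sqrt{2c}/(h+1)$ is the \emph{lower} edge $G_h(a_h,c)=-F_h(-a_h,-c)$ for $2\le c\le\Gamma_h(a_h)$.
\item \textbf{The endpoint of the $s$-range.} It is $\Gamma_h(a_h)=-\Lambda_h(-a_h)$, not $-\Lambda_h(a_h)=1+h^2a_h$.
\end{itemize}

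The remaining bookkeeping is exactly where the two regimes differ, and it is either deferred or inaccurate.
\begin{itemize}
\item \textbf{Corners of the $A_h$-face.} The lines $c=1+h^2a$ and $c=1+(h^2+h)a$ meet only at $a=0$, which gives $(0,0,1)$. The corner on $a=a_h$ comes from $c=1+(h^2+h)a_h$.
\item \textbf{The point $t=2h/(h+1)$.} This is a tangency, not a vertex, since $2h+1+ah(h+1)-\Gamma_h(a)=h\bigl(\rho-\sqrt{a+a_h}\bigr)^2$. It is extreme only as the endpoint of a strictly concave arc.
\item \textbf{Corners of the $C_h$-polygon you did not list.} These are $(-a_h,2/(h+1),0)$, and $(-a_h,4/(h+1),-2)$ when $h\ge h_0$. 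When $h<h_0$ there are also the negative of your vertex with $a=a_h$, $c=(2h^2+4h+1)/(h+1)$, and the point $\bigl(-2h/(h+1),2(2h^2+2h+1)/(h+1),-(2h^2+2h+1)\bigr)$. Each must be matched to the list: they are the $s=0$ end of $\Phi_h$, the $s=2$ end of $-\Psi_h$, a listed point, and the $t=2h/(h+1)$ end of $-\Upsilon_h$.
\item \textbf{The section $T_h^+$ for $h\ge h_0$, left undone.} Its upper edge is $b=0$, $b=-c/(h+1)$, $b=(1-h^2a_h-c)/h$ on $[\Lambda_h(a_h),0]$, $[0,\frac{2h+1}{h+1}]$, $[\frac{2h+1}{h+1},\Gamma_h(a_h)]$. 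Its lower edge consists of a segment, the arc $-\Phi_h$, the segment $b=-(2+c)/(h+1)$ on $[0,2]$, and the arc $\Psi_h$, glued tangentially. So its extreme points are the two arcs and the corners at $c=\Lambda_h(a_h),\,0,\,\frac{2h+1}{h+1},\,\Gamma_h(a_h)$.
\end{itemize}

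The paper settles completeness by listing every pairwise intersection of the pieces. Without an equivalent computation, your proposal does not prove that the stated list is exactly $\ext({\mathsf B}_h)$.
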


\begin{proof}
	We first identify all possible extreme points. The pieces $\graph(F_h|_{A_h})$ and $\graph(F_h|_{C_h})$ are contained in affine planes and are therefore ruled. The piece over $B_h$ is ruled by the segments
	\[
	L_\lambda:=\{(a,\lambda a+1):-a_h\leq a\leq0\},
	\qquad h^2\leq\lambda\leq(h+1)^2
	\]
	since
	\[
	B_h=\bigcup_{h^2\leq\lambda\leq(h+1)^2}L_\lambda
	\quad\text{and}\quad
	F_h(a,\lambda a+1)=-2\sqrt\lambda\,a.
	\]
	This is the family depicted in Figure~\ref{fig:LLambda}.
	\begin{figure}
	\begin{center}
	\includegraphics[width=.4\textwidth]{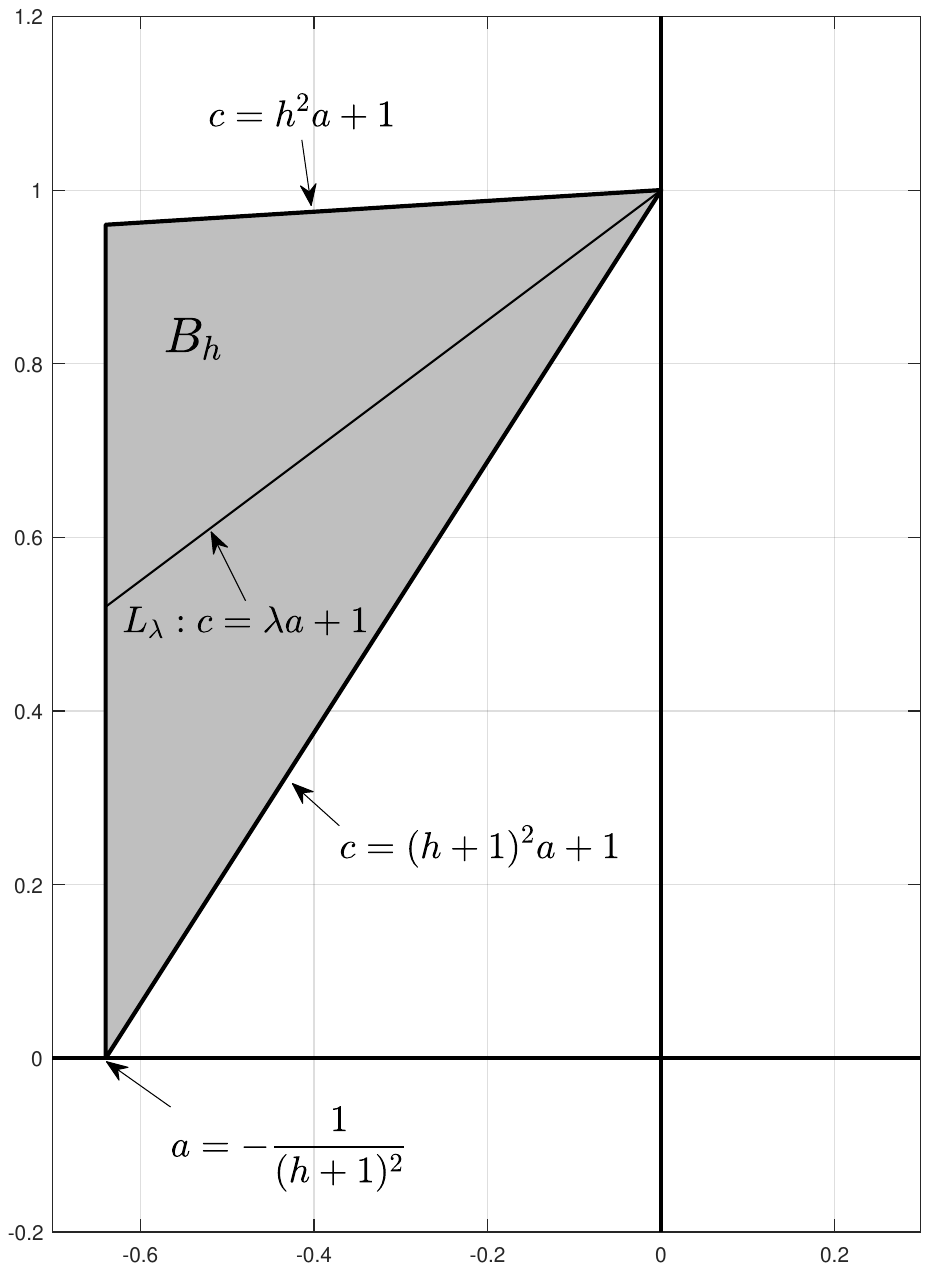}
	\caption{Segment $L_\lambda$ with $h=1/4$ and $\lambda=3/4$.}\label{fig:LLambda}
	\end{center}
	\end{figure}
	
	The piece over $D_h$ is ruled in the same way. For $\lambda\geq(h+1)^2$, set
	\[
	a_\lambda:=a_h-
	\frac{1+h^2 a_h}{\left(\sqrt\lambda-h\right)^2}
	\]
	and
	\[
	R_\lambda:=\left\{\left(a,\lambda(a-a_h)\right):
	\max\{-a_h,a_\lambda\}\leq a\leq a_h\right\}.
	\]
	Then,
	\[
	D_h=\bigcup_{\lambda\geq(h+1)^2}R_\lambda,
	\qquad
	F_h\bigl(a,\lambda(a-a_h)\bigr)
	=2\sqrt\lambda\,(a_h-a).
	\]
	For $0<h<h_0$ one has $a_\lambda>-a_h$, so this reduces to the parametrization shown in Figure~\ref{fig:R_Lambda}.
	\begin{figure}
	\begin{center}
	\includegraphics[width=.4\textwidth]{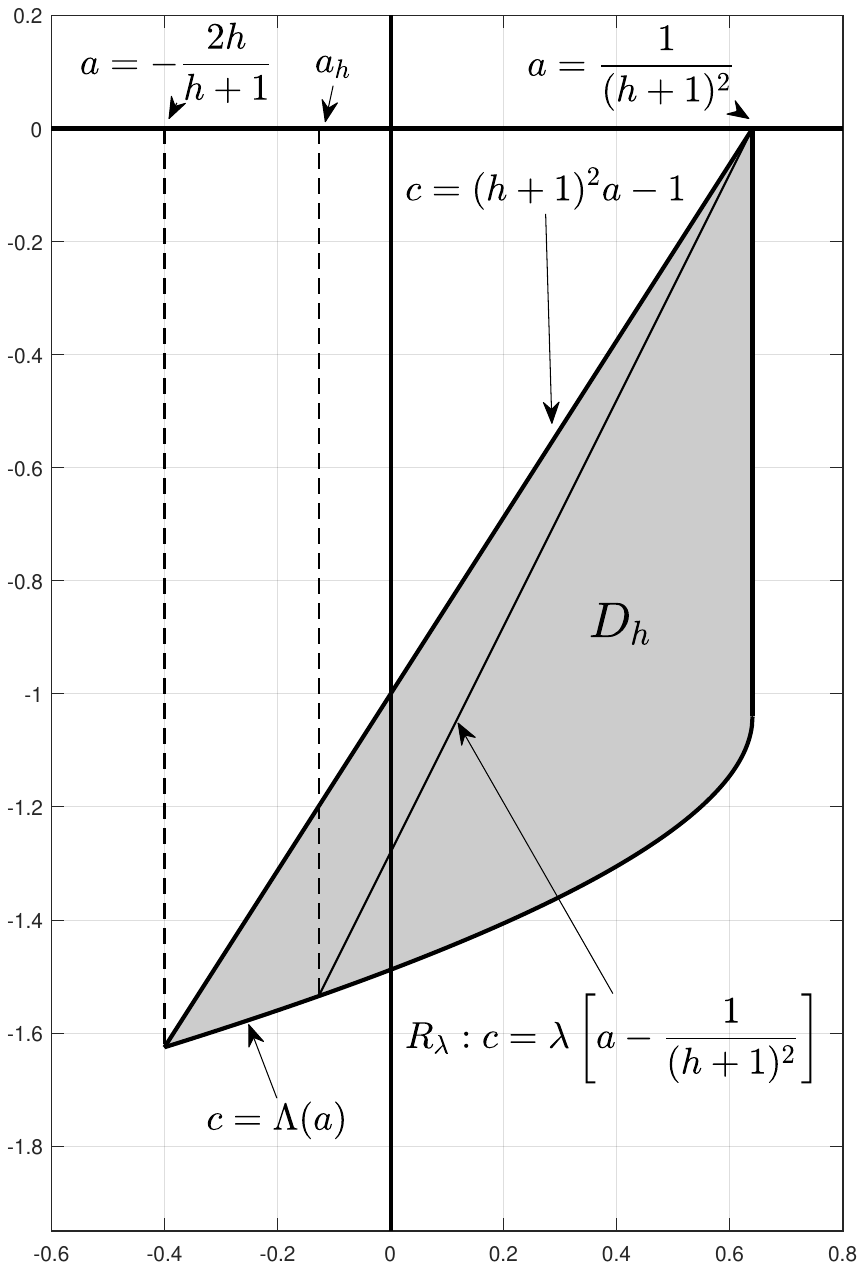}
	\caption{Segment $R_\lambda$ with $h=1/4$ and $\lambda=2$.}\label{fig:R_Lambda}
	\end{center}
	\end{figure}
	By central symmetry, the corresponding pieces of the lower graph are ruled as well. No point in the relative interior of a nontrivial ruling segment can be extreme. Thus, all candidates occur on the boundary curves and intersections of these ruled pieces or on the two vertical faces $T_h^\pm$.
	
	We now treat the two parameter regimes.
	
	\medskip
	\noindent\textbf{Case 1: $0<h<h_0$.}
	Define
	\[
	\Upsilon_h:=\left\{\left(t,\frac{1-h^2t-\Gamma_h(t)}h,\Gamma_h(t)\right):
	-a_h\leq t\leq\frac{2h}{h+1}\right\}
	\]
	and
	\[
	\Phi_h:=\left\{\left(-a_h,\frac{2\sqrt{1-s}}{h+1},s\right):
	0\leq s\leq\frac{2h+1}{(h+1)^2}\right\}.
	\]
	Introduce the points
	\begin{align*}
	p_1&:=\left(-a_h,\frac{2h}{(h+1)^2},1-\frac{h^2}{(h+1)^2}\right),
	& p_2&:=\left(-a_h,0,1+\frac{h^2}{(h+1)^2}\right),\\
	p_3&:=\left(a_h,-\frac{2h^2+6h+3}{(h+1)^2},\frac{2h^2+4h+1}{h+1}\right),
	& p_4&:=\left(a_h,-\frac{2h+1}{(h+1)^2},\frac{2h+1}{h+1}\right),\\
	p_5&:=(0,0,1),
	& p_6&:=\left(-a_h,\frac2{h+1},0\right),\\
	p_7&:=\left(-\frac{2h}{h+1},\frac{2(2h^2+2h+1)}{h+1},-(2h^2+2h+1)\right),
	& p_8&:=\left(a_h,0,0\right).
	\end{align*}
	Direct substitution in the formulas of Theorem~\ref{parametrization} gives
	\begin{align*}
	\graph(F_h|_{A_h})\cap T_h^-&=[p_1,p_2],
	&\graph(F_h|_{A_h})\cap T_h^+&=[p_3,p_4],\\
	\graph(F_h|_{A_h})\cap\graph(F_h|_{C_h})&=[p_4,p_5],
	&\graph(F_h|_{A_h})\cap\graph(F_h|_{B_h})&=[p_5,p_1],\\
	\graph(F_h|_{B_h})\cap\graph(F_h|_{C_h})&=[p_6,p_5],
	&\graph(F_h|_{B_h})\cap T_h^-&=\Phi_h,\\
	\graph(F_h|_{C_h})\cap T_h^-&=[-p_3,p_6],
	&\graph(F_h|_{C_h})\cap T_h^+&=[p_8,p_4],\\
	\graph(F_h|_{C_h})\cap\graph(F_h|_{D_h})&=[p_7,p_8].
	\end{align*}
	Moreover,
	\[
	\begin{aligned}
	& \graph(F_h|_{A_h})\cap\graph(G_h|_{A_h})\\
	& \qquad = \Upsilon_h\cup \left\{\left(t,-2-(2h+1)t,2h+1+h(h+1)t\right):\frac{2h}{h+1}\leq t\leq a_h\right\}.
	\end{aligned}
	\]
	where the second component is a line segment, and
	\[
	\graph(F_h|_{D_h})\cap\graph(G_h|_{-A_h})=-\Upsilon_h.
	\]
	Finally, $\graph(F_h|_{D_h})\cap T_h^-=\varnothing$, while
	$\graph(F_h|_{D_h})\cap T_h^+$ is a segment whose endpoints are $p_8$ and $-p_2$.
	It follows that the only candidates are
	\[
	\pm\Upsilon_h,\qquad \pm\Phi_h,\qquad
	\pm p_3,\quad\pm p_4,\quad\pm p_5,\quad\pm p_8.
	\]
	
	\medskip
	\noindent\textbf{Case 2: $h\geq h_0$.}
	In this case define
	\[
	\Upsilon_h:=\left\{\left(t,\frac{1-h^2t-\Gamma_h(t)}h,\Gamma_h(t)\right):
	-a_0\leq t\leq a_0\right\},
	\]
	retain the preceding definition of $\Phi_h$, and set
	\[
	\Psi_h:=\left\{\left(a_h,-\frac{2\sqrt{2s}}{h+1},s\right):
	2\leq s\leq\Gamma_h(a_h)\right\}.
	\]
	Notice that
	\[
	\Gamma_h(a_h)=
	\left(\frac{\sqrt{2h^2+2h+1}+\sqrt2\,h}{h+1}\right)^2.
	\]
	Let $u_+$ denote the right endpoint of $\Upsilon_h$ and put
	\[
	v_-:=\left(-a_h,\frac4{h+1},-2\right).
	\]
	With $p_1,p_2,p_4,p_5,p_6,p_8$ as above, direct substitution gives
	\begin{align*}
	\graph(F_h|_{A_h})\cap T_h^-&=[p_1,p_2],
	&\graph(F_h|_{A_h})\cap T_h^+&=[u_+,p_4],\\
	\graph(F_h|_{A_h})\cap\graph(F_h|_{C_h})&=[p_4,p_5],
	&\graph(F_h|_{A_h})\cap\graph(F_h|_{B_h})&=[p_5,p_1],\\
	\graph(F_h|_{B_h})\cap\graph(F_h|_{C_h})&=[p_6,p_5],
	&\graph(F_h|_{B_h})\cap T_h^-&=\Phi_h,\\
	\graph(F_h|_{C_h})\cap T_h^-&=[v_-,p_6],
	&\graph(F_h|_{C_h})\cap T_h^+&=[p_8,p_4],\\
	\graph(F_h|_{C_h})\cap\graph(F_h|_{D_h})&=[v_-,p_8],
	&\graph(F_h|_{D_h})\cap T_h^-&=-\Psi_h.
	\end{align*}
	Furthermore,
	\[
	\graph(F_h|_{A_h})\cap\graph(G_h|_{A_h})=\Upsilon_h,
	\qquad
	\graph(F_h|_{D_h})\cap\graph(G_h|_{-A_h})=-\Upsilon_h,
	\]
	and $\graph(F_h|_{D_h})\cap T_h^+$ is the segment $[-p_2,p_8]$.
	The lower endpoint of $\Psi_h$ is $-v_-$, and its upper endpoint is $u_+$. Therefore, the complete list of candidates is
	\[
	\pm\Upsilon_h,\qquad \pm\Phi_h,\qquad \pm\Psi_h,
	\qquad \pm p_4,\quad\pm p_5,\quad\pm p_8.
	\]
	At $h=h_0$ the interval defining $\Psi_h$ degenerates at its lower endpoint, so the two descriptions fit continuously.
	
	It remains to prove that all the listed candidates are extreme.
	
	First consider $\Upsilon_h$, and let $\gamma_h$ denote its projection onto the $ac$-plane. On the relevant interval,
	\[
	\Gamma_h''(t)
	=-\frac{h\sqrt{1+h^2a_h}}
	{2\left(t+a_h\right)^{3/2}}<0.
	\]
	Thus, every interior point of $\gamma_h$ is extreme in $\pi_{ac}({\mathsf B}_h)$. The left endpoint is an endpoint of the exposed face $a=-a_h$. If $h\geq h_0$, the right endpoint is an endpoint of the exposed face $a=a_h$; if $0<h<h_0$, it is the endpoint of the adjacent maximal affine boundary segment. Hence, both endpoints are extreme as well. The fibre over every point of $\gamma_h$ is a singleton because
	\[
	F_h(t,\Gamma_h(t))=G_h(t,\Gamma_h(t)).
	\]
	If a point of $\Upsilon_h$ were the midpoint of two points of ${\mathsf B}_h$, their projections would both equal the same extreme point of $\gamma_h$, and the singleton-fibre property would force the two points to coincide. Hence, every point of $\Upsilon_h$ is extreme.
	
	Next, the hyperplane $a=-a_h$ supports ${\mathsf B}_h$. In the face ${\mathsf B}_h\cap\{a=-a_h\}$, the curve $\Phi_h$ is the upper boundary
	\[
	b=\frac{2\sqrt{1-c}}{h+1},
	\]
	and the face lies below this graph. Since this function is strictly concave, every point of $\Phi_h$ is extreme in the face and hence in ${\mathsf B}_h$.
	
	When $h\geq h_0$, the hyperplane $a=a_h$ is also supporting. In the face ${\mathsf B}_h\cap\{a=a_h\}$, the curve $\Psi_h$ is the lower boundary
	\[
	b=-\frac{2\sqrt{2c}}{h+1},
	\]
	and the face lies above this graph. The displayed function is strictly convex on $c>0$, so every point of $\Psi_h$ is extreme in this face and therefore in ${\mathsf B}_h$.
	
	Finally, for $(x,y)\in\square_h$, write
	\[
	\ell_{x,y}(a,b,c):=ax^2+bxy+cy^2.
	\]
	Since $|\ell_{x,y}|\leq1$ on ${\mathsf B}_h$, the equations $\ell_{x,y}=\pm1$ define supporting hyperplanes.
	For $0<h<h_0$, the point $p_3$ satisfies
	\[
	\ell_{h,1}(p_3)=1,\qquad
	\ell_{h+1,0}(p_3)=1,\qquad
	\ell_{h+1,1}(p_3)=-1.
	\]
	The point $p_4$ satisfies the same three equations with the last value equal to $1$. The corresponding normal vectors
	\[
	(h^2,h,1),\qquad ((h+1)^2,0,0),\qquad ((h+1)^2,h+1,1)
	\]
	have determinant $(h+1)^2\neq0$. Hence, $p_3$ and $p_4$ are exposed whenever they occur in the list.
	
	For $p_5=(0,0,1)$, one has $\ell_{x,1}(p_5)=1$ for every $x\in[h,h+1]$. Choosing three distinct values of $x$ gives three linearly independent Vandermonde normal vectors, so $p_5$ is exposed. For $p_8=(a_h,0,0)$, one has $\ell_{h+1,y}(p_8)=1$ for every $y\in[0,1]$. Taking $y=0,1/2,1$ gives three normal vectors with determinant $(h+1)^3/4\neq0$, so $p_8$ is exposed.
	
	Central symmetry now gives the extremality of all negatives. The preceding ruling analysis proves that no further extreme points exist, and the two lists in the statement follow.
\end{proof}

\begin{figure}
	\begin{center}
		\includegraphics[width=.4\textwidth]{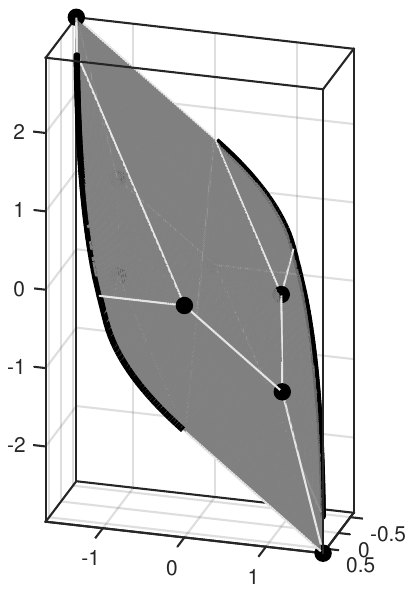}
		\caption{Representation of ${\mathsf S}_h$  for $h=1/4$. The curves of extreme points are depicted with thick lines whereas isolated extreme points appear as small black spheres.}
	\end{center}
\end{figure}

\begin{figure}
	\begin{center}
		\includegraphics[width=.4\textwidth]{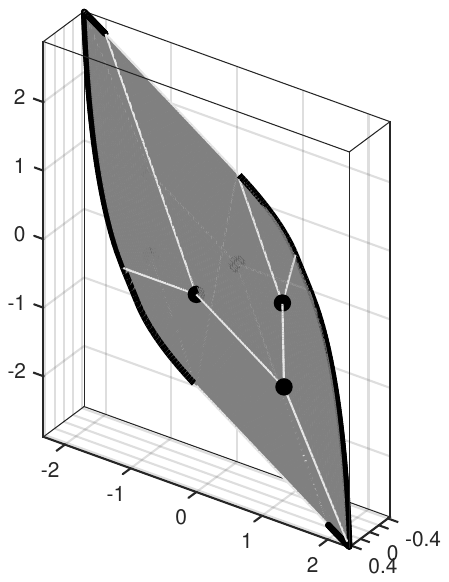}
		\caption{Representation of ${\mathsf S}_h$  for $h=1/2$. The curves of extreme points are depicted with thick lines whereas isolated extreme points appear as small black spheres.}
	\end{center}
\end{figure}

%
%

\end{document}